\newlist{assump}{enumerate}{1}
\setlist[assump,1]{label=(A\arabic*),ref=(A\arabic*)}
\setlist[assump]{resume}
\crefname{assumpi}{assumption}{assumptions}
\newtheorem{lemma}{Lemma}
\newtheorem{problem}[lemma]{Problem}
\newtheorem{theorem}[lemma]{Theorem}
\theoremstyle{definition}
\newtheorem{remark}[lemma]{Remark}
\def\div{\operatorname{div}}
\def\RR{\mathbb{R}}
\def\LL{\mathbf{L}}
\def\dt{\partial_t}
\def\dphi{\partial_\phi}
\def\dgradphi{\partial_{\nabla\phi}}
\def\dtheta{\partial_\vtheta}
\def\dtau{d^{n+1}_\tau}
\def\M{\mathbf{M}}
\def\K{\mathbf{K}}
\def\C{\mathbf{C}}
\def\L{\mathbf{L}}
\def\I{\mathbf{I}}
\def\G{\mathbf{G}}
\def\x{\mathbf{x}}
\def\y{\mathbf{y}}
\def\b{\mathbf{b}}
\def\Itau{\mathcal{I}_\tau}
\def\Itauk{\mathcal{I}_{\tau_k}}
\def\la{\langle}
\def\ra{\rangle}
\def\vtheta{\vartheta}
\def\Th{\mathcal{T}_h}
\def\Vh{\mathcal{V}_h}
\def\div{\operatorname{div}}
\DeclarePairedDelimiter{\norm}{\|}{\|}
\DeclarePairedDelimiter{\snorm}{|}{|}
\def\softd{{\leavevmode\setbox1=\hbox{d}%
		\hbox to 1.05\wd1{d\kern-0.4ex{\char039}\hss}}}
\journal{Applied Mathematics and Computation}
\begin{document}

\begin{frontmatter}

\title{Structure-preserving approximation of the non-isothermal Cahn-Hilliard system based on the entropy equation}
\author{Aaron Brunk\fnref{mainz}}
\cortext[cor1]{Corresponding author}
\ead{abrunk@uni-mainz.de}
\author{Dennis Höhn\corref{cor1}\fnref{mainz}}
\ead{dennis.hoehn@uni-mainz.de}
\author{M\'aria Luk\'a\v{c}ov\'a-Medvi\softd ov\'a\fnref{mainz}}
\ead{lukacova@uni-mainz.de}
\affiliation[mainz]{organization={Johannes Gutenberg-Universität},
addressline={Staudingerweg 9}, 
city={Mainz},
postcode={55099}, 
country={Germany}}

\begin{abstract}
We present and investigate a structure-preserving approximation of the non-isothermal Cahn-Hilliard equation, employing conforming finite elements for spatial discretisation and a tailored mixed explicit-implicit scheme for time integration. To guarantee the preservation of key structural properties, namely mass and internal energy conservation, along with entropy production, we formulate the continuous problem within an appropriate variational framework based on the entropy equation. Our analytical results are validated through numerical experiments, including a convergence study

\end{abstract}

\begin{keyword}
Finite elements \sep Phase-field \sep Structure-preserving Approximation\sep Non-isothermal \sep Cahn-Hilliard
\end{keyword}

\end{frontmatter}

\section{Introduction}\label{sec:intro}

The Cahn-Hilliard equation is a well-established model in physics for describing conservative phase transitions, such as magnetism \cite{Gunton1983} and spinodal decomposition \cite{CH,Bray01061994}, particularly in binary systems like alloys or liquid mixtures. These transitions typically occur when a system is quenched below a critical temperature, prompting phase separation. In many applications, the system is studied under isothermal conditions, assuming a constant temperature. However, the influence of nonisothermal effects has gained attention in recent work \cite{Lebedev2019,polym15163475,https://doi.org/10.1002/cjce.24873}. Cahn-Hilliard-type models have also been applied beyond materials science, including tumour growth modelling, both in isothermal \cite{Trauti24} and nonisothermal contexts \cite{IPOCOANA2022125665}. In materials science, non-isothermal phase-field models are particularly relevant for processes such as sintering and powder bed fusion \cite{Yang2019, yangscripta2020,Timi23}, where local temperature fluctuations can drive the system above or below critical thresholds. These scenarios necessitate non-isothermal variants of the Cahn-Hilliard equation. In this work, we study the non-isothermal Cahn-Hilliard (NCH) equation in the framework of Pawlow and Alt \cite{Alt1992}, given by the system:
\begin{align}
    \dt\phi - \div\left(\M\nabla\tfrac{\mu}{\vtheta}-\C\nabla\tfrac{1}{\vtheta}\right) &= 0,  \label{eq:sys1}\\
    \tfrac{\mu}{\vtheta} + \gamma\Delta\phi - \dphi \tfrac{f(\phi,\vtheta)}{\vtheta}&= 0, \label{eq:sys2}\\
    \dt e(\phi,\vtheta) - \div\left(\C\nabla\tfrac{\mu}{\vtheta}-\K\nabla\tfrac{1}{\vtheta}\right) &= 0. \label{eq:sys3}
\end{align}
Here $\phi$ is the phase-field variable distinguishing between phase A ($\phi=0$) and phase B($\phi=1$), $\tfrac{\mu}{\vtheta}$ is the chemical potential and $\vtheta$ is the absolute temperature. The matrices $\M=\M(\phi,\nabla\phi,\vtheta),\K=\K(\phi,\nabla\phi,\vtheta)$ and $\C=\C(\phi,\nabla\phi,\vtheta)$ represent the mobility, thermal conductivity, and cross-coupling coefficients and may depend on $\phi,\nabla\phi$ and $\vtheta$.
The system is posed on a periodic spatial domain $\Omega$ over a finite time interval $(0,T)$, supplemented by initial conditions and subject to the second law of thermodynamics, ensuring non-decreasing entropy over time. The Helmholtz free energy density $F(\phi,\nabla\phi,\vtheta)$ is given as follows
\begin{equation}
    F(\phi,\nabla\phi,\vtheta) := \frac{\gamma\vtheta}{2}\snorm{\nabla\phi}^2 + f(\phi,\vtheta),\label{eq:helmholtz}
\end{equation}
The relationship between the chemical potential $\frac{\mu}{\vtheta}$, internal energy density $e$ and entropy density $s$ is then expressed via
\begin{align}
    \frac{\mu}{\vtheta}&:=\delta_\phi \frac{F(\phi,\nabla\phi,\vtheta)}{\vtheta} =-\gamma\Delta\phi + \dphi \tfrac{f(\phi,\vtheta)}{\vtheta}\\
    s(\phi,\nabla\phi,\vtheta)&:=-\dtheta F(\phi,\nabla\phi,\vtheta) = -\frac{\gamma}{2}\snorm{\nabla\phi}^2 - \partial_\vtheta f(\phi,\vtheta),\\
    e(\phi,\vtheta)&:=F(\phi,\nabla\phi,\vtheta)+\vtheta s(\phi,\nabla\phi,\vtheta)= f(\phi,\vtheta) - \vtheta \partial_\vtheta f(\phi,\vtheta).
\end{align}
This structure guarantees the following thermodynamic properties:
\begin{gather*}
  \int_\Omega \phi(t) = \int_\Omega \phi(0) , \qquad \int_\Omega e(\phi(t),\vtheta(t))  = \int_\Omega e(\phi(0),\vtheta(0)) , \\
  \int_\Omega s(\phi(t),\nabla\phi(t),\vtheta(t)) = \int_\Omega s(\phi(0),\nabla\phi(0),\vtheta(0)) + \int_0^t \mathcal{D}\left(\tfrac{\mu}{\vtheta},\tfrac{1}{\vtheta}\right)
  \intertext{where the entropy production $\mathcal{D}$ is given by}
  \mathcal{D}\left(\tfrac{\mu}{\vtheta},\tfrac{1}{\vtheta}\right) = \int_\Omega\begin{pmatrix}
      \nabla \frac{1}{\vtheta} & \nabla \frac{\mu}{\vtheta}
  \end{pmatrix}\begin{pmatrix}
      \K & -\C \\ -\C & \M
  \end{pmatrix}\begin{pmatrix}
      \nabla \frac{1}{\vtheta} \\ \nabla \frac{\mu}{\vtheta}
  \end{pmatrix}.
\end{gather*}
If the Onsager matrix
\begin{equation}\label{eq:onsag}
    \L=\begin{pmatrix}
      \K & -\C \\ -\C & \M
    \end{pmatrix},
\end{equation}
which depends on $\phi,\nabla\phi$ and $\vtheta$, i.e. $\L=\L(\phi,\nabla\phi,\vtheta)$, is positive (semi)-definite, then the entropy $s(\phi,\nabla\phi,\vtheta)$ is nondecreasing in time.\\

Various extensions of non-isothermal phase-field models have been developed in the literature, encompassing both conserved and non-conserved dynamics. General frameworks can be found in \cite{Pawlow2016,Heida2011}. The non-conserved case was notably addressed by Penrose and Fife \cite{Penrose1990}, while the conserved formulation of Alt and Pawłow \cite{Alt1992} uses entropy as the driving potential. These models can also be derived using the GENERIC or SNET formalism \cite{Gladkov2016}, or from a microforce balance perspective following Gurtin's framework \cite{GURTIN1996178,article,Marveggio2021}, or via variational principles such as the least action \cite{deanna2022temperature}. Finite-speed thermal propagation has also been addressed, e.g., in models by Caginalp \cite{Caginalp1986,COLLI2024113461}. Further extensions to include (quasi-)incompressible fluid flow are discussed in \cite{Alessia2014,Eleutri15}. Analytical studies on these models, including the existence of weak solutions, especially in the presence of cross-coupling terms, can be found in \cite{alt1992existence,KENMOCHI19941163,Colli_memory,Colli_Penrose,COLLI2024113461}.\\

On the numerical side, an implicit Euler method using inverse temperature as a variable was first introduced in \cite{Pawlow2016}, and later enhanced for energy stability by González et al. \cite{GonzalezFerreiro2014} via the Average Vector Field (AVF) method \cite{Gonzales96}. More recently, Brunk et al. \cite{BrunkPamm,BrunkCMAM} proposed structure-preserving schemes for such systems using the inverse temperature as the main variable. The Cahn-Hilliard and Allen-Cahn equations, known to exhibit a gradient structure, have been approximated using techniques like convex-concave splitting \cite{Jie}, the Scalar Auxiliary Variable (SAV) approach \cite{SHEN2018407,LI23}, and Energy Quadratisation (EQ) \cite{CHEN2022,Zhang2022}. These methods introduce auxiliary variables to relax the energy or entropy functional. In particular, EQ methods have been applied to the internal energy equation in \cite{Guo_2015,Sun2020}.\\

The goal of this work is to investigate the variational structure of the NCH system \eqref{eq:sys1}–\eqref{eq:sys3} and to develop a systematic discretisation in space and time using standard temperature variable, instead of the inverse temperature. The main contributions of this paper are:
\begin{itemize}
\item A reformulation of the system using the entropy equation in place of the internal energy equation to highlight the underlying variational structure.
\item A structure-preserving numerical approximation, employing conventional discretisation techniques in both space and time.
\end{itemize}

The manuscript is organised as follows:
In Section~\ref{sec:Prel}, we introduce notation and reformulate the system by replacing the internal energy equation with its entropy-based counterpart. In Section~\ref{sec:Var}, we derive a variational formulation of the reformulated system suitable for standard finite element approximation. Section~\ref{sec:fulldiscrete} and Section~\ref{sec:structurepresurving} present the fully discrete scheme based on a mixed explicit-implicit time integration and convex-concave splitting of the potential. Numerical results, including convergence studies and an application-motivated example, are provided in Section~\ref{sec:numerical}.

\section{Preliminaries}\label{sec:Prel}
Before delving into the details of our discretisation strategy and main results, we first introduce the necessary notation, state our primary assumptions, and recall some basic definitions.

\subsection{Notation}
We study the system \eqref{eq:sys1}--\eqref{eq:sys3} over a finite time interval $(0,T).$ To simplify the analysis and avoid complications related to boundary conditions, we assume a periodic spatial domain:
\begin{assump}
    \item Let $\Omega \subset \RR^d$, $d=1,2,3$ be a cube, identified with the $d$-dimensional torus $\mathcal{T}^d$. That is, we impose periodic boundary conditions and assume that the functions on $\Omega$ are periodic throughout the paper. 
\end{assump}
We denote by $L^p(\Omega)$ and $W^{k,p}(\Omega)$ the standard periodic Lebesgue and Sobolev spaces, equipped with norms $\norm{\cdot}_{L^{p}}$ and $\norm{\cdot}_{W^{k,p}}$, respectively. 
In particular, we set $H^k(\Omega)=W^{k,2}(\Omega)$ and use $\norm{\cdot}_{H^{k}} = \norm{\cdot}_{W^{k,2}}$. 
%
%
%
The duality pairing on $H^{-k}(\Omega) \times H^k(\Omega)$ is denoted by $\langle \cdot, \cdot\rangle$.
We use the same notation for scalar product on $L^2(\Omega)$, which is defined as
\begin{align*}
\la u, v \ra = \int_\Omega u \cdot v \, dx \qquad \forall u,v \in L^2(\Omega).    
\end{align*}
%
%
%


For later use, we recall the following topological degree theorem from \cite{gallouet2008unconditionally} which will be crucial in proving the existence of discrete solutions in our analysis:
\begin{theorem}\label{th:topo}
Let N and M be two positive integers and $C_1, C_2$ and $\varepsilon$ three positive constants. We define 
\begin{align*}
    V &= \{(\x, \y) \in \RR^N\times\RR^M ~\text{such that}~\y > 0\},\\
    W &= \{(\x, \y)\in\RR^N\times\RR^M~\text{s.t.}~\|\x\|<C_1 ~\text{and}~ \varepsilon<\y<C_2\}.
\end{align*}
Here, the notation $\y>c$ means that each component of $\y$ is greater than a constant $c\in\RR$. Further $\|\cdot\|$ is a norm defined over $\RR^N$. Let
$\b\in\RR^N\times\RR^M$, $\mathbf{g}$ and $\G$ be two continuous functions, respectively, from $V$ and $V\times[0, 1]$ to $\RR^N\times\RR^M$
satisfying the following conditions:
\begin{enumerate}[label=(\roman*)]
    \item $\G(\cdot, 1) = \mathbf{g}(\cdot)$
    \item The topological degree of $\G(\cdot, 0):=\G_0(\cdot)$ with respect to $W$ and $\b$ is equal to $d_0\neq0$, i.e.
    \begin{equation*}
        \mathrm{deg}(\G_0,W,\b):= \sum_{\x\in \G_0^{-1}(\b)}\mathrm{sgn}\left(\det\left(J_{\G_0}(\x)\right)\right) = d_0.
    \end{equation*}
    \item For all $\alpha\in[0, 1]$, if $\mathbf{v}\in V$ is such that $\G(\mathbf{v},\alpha) = \b$ then $\mathbf{v}\in W$.
\end{enumerate}
Then the topological degree of $\G(\cdot, 1)$ with respect to $W$ and $\b$ is also equal to $d_0\neq0$. Consequently, there exists
at least one solution $\mathbf{v}\in W$, such that $\mathbf{g}(\mathbf{v}) = \b$.
\end{theorem}

\subsection{Assumptions}\label{sec:assump}

In this subsection, we summarize the minimal set of assumptions that will be used throughout the remainder of the manuscript. 

\begin{assump}
    \item The interface parameter $\gamma\in\RR$ is strictly positive.
    \item The Onsager matrix $\LL=\LL(\phi,\nabla\phi,\vtheta)\in\mathbb{R}^{2d \times 2d}$ from \eqref{eq:onsag} is positive definite. Specifically, there exist constants $\lambda_0,\lambda_1>0$ such that for all $\mathbf{\xi}\in\mathbb{R}^{2d}$
    \begin{equation*}
       \lambda_0\snorm{\xi}^2 \leq \mathbf{\xi}^\top\LL\mathbf{\xi} \leq \lambda_1\snorm{\xi}^2.
    \end{equation*}\label{as:onsag}
    \item The driving potential $f(\cdot,\cdot):\RR\times\RR_+\to \RR$ is smooth. For each fixed $\phi\in\mathbb{R}$, the function  $f(\phi,\cdot):\RR_+\to\RR$ is concave and diverges as $\vtheta\to 0$. Moreover, for each fixed $\vtheta >0$, the function $f(\cdot,\vtheta)$ can be decomposed into a convex and a concave part denoted by $f_{\text{vex}}$ and $ f_{\text{cav}}$, such that $f(\cdot,\vtheta)=f_{\text{vex}}(\cdot,\vtheta)+f_{\text{cav}}(\cdot,\vtheta)$.\label{as:pot}

\end{assump}

\subsection{Structural identities}

On the formal level one can choose between several formulations of the temperature contribution which allows to switch between the internal energy $e$ and the entropy $s$.

\begin{lemma}\label{th:spp}
    Regular solutions $(\phi,\mu,\vtheta)$ to the non-isothermal Cahn-Hilliard system \eqref{eq:sys1}-\eqref{eq:sys3} satisfy
    \begin{gather}
        \la \dt\phi,1 \ra = \la \dt e(\phi,\vtheta),1 \ra = 0,
        \quad\la \dt s(\phi,\nabla\phi,\vtheta),1 \ra = 
        \mathcal{D}\left(\tfrac{\mu}{\vtheta},\tfrac{1}{\vtheta}\right) \geq 0.
    \end{gather}  
\end{lemma}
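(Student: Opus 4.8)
The plan is to verify the three structural identities by testing the governing equations against appropriate constant or state-dependent functions and integrating by parts, using periodicity to discard boundary terms.

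\paragraph{Mass and energy conservation.}
First I would establish the two conservation laws, which are the easiest. Testing the phase equation \eqref{eq:sys1} against the constant function $1$ gives
\begin{equation*}
  \la \dt\phi,1 \ra = \la \div(\M\nabla\tfrac{\mu}{\vtheta}-\C\nabla\tfrac{1}{\vtheta}),1 \ra.
\end{equation*}
Integrating by parts and using that $\nabla 1 = 0$ together with periodicity (so the boundary term vanishes), the right-hand side is zero, yielding $\la \dt\phi,1 \ra = 0$. An identical argument applied to the internal-energy equation \eqref{eq:sys3}, again testing against $1$, gives $\la \dt e(\phi,\vtheta),1 \ra = 0$, since the divergence of the energy flux integrates to zero over the torus. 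These two steps are routine and require only the periodic setting from (A1).

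\paragraph{Entropy production.}
The genuinely substantive identity is $\la \dt s(\phi,\vtheta),1 \ra = \mathcal{D}(\tfrac{\mu}{\vtheta},\tfrac{1}{\vtheta})$. The key is the thermodynamic relation linking the entropy density $s$, the internal energy density $e$, the Helmholtz free energy \eqref{eq:helmholtz}, and the chemical potential $\tfrac{\mu}{\vtheta}$. Using the standard Legendre relations $e = F - \vtheta\,\dtheta F$ and $s = -\dtheta F$ together with the definition $\tfrac{\mu}{\vtheta} = \dphi\tfrac{f}{\vtheta} - \gamma\Delta\phi$ from \eqref{eq:sys2}, I would express $\dt s$ through $\dt\phi$ and $\dt e$. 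Concretely, I expect the chain rule to give a pointwise identity of the form
\begin{equation*}
  \dt s = \tfrac{1}{\vtheta}\,\dt e - \tfrac{\mu}{\vtheta}\,\dt\phi
\end{equation*}
(modulo the gradient contribution from the interfacial term, handled below), which is precisely the Gibbs relation adapted to this free energy. Integrating this over $\Omega$ and inserting the two evolution equations \eqref{eq:sys1} and \eqref{eq:sys3} in place of $\dt e$ and $\dt\phi$ is what will produce the entropy production.

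\paragraph{Assembling the flux form and the main obstacle.}
After substitution, I would integrate by parts to move the divergences onto the test factors $\tfrac{1}{\vtheta}$ and $\tfrac{\mu}{\vtheta}$, discarding boundary terms by periodicity. This should produce exactly the bilinear flux expression
\begin{equation*}
  \int_\Omega
  \begin{pmatrix} \nabla\tfrac{1}{\vtheta} & \nabla\tfrac{\mu}{\vtheta} \end{pmatrix}
  \begin{pmatrix} \K & -\C \\ -\C & \M \end{pmatrix}
  \begin{pmatrix} \nabla\tfrac{1}{\vtheta} \\ \nabla\tfrac{\mu}{\vtheta} \end{pmatrix},
\end{equation*}
which is $\mathcal{D}(\tfrac{\mu}{\vtheta},\tfrac{1}{\vtheta})$, and nonnegativity then follows immediately from the positive-definiteness of the Onsager matrix $\LL$ in (A3). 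The main obstacle I anticipate is the careful bookkeeping in the entropy chain rule: the interfacial term $\tfrac{\gamma\vtheta}{2}\snorm{\nabla\phi}^2$ in \eqref{eq:helmholtz} contributes a $\nabla\phi\cdot\nabla\dt\phi$ term whose integration by parts must reproduce precisely the $-\gamma\Delta\phi$ piece of the chemical potential in \eqref{eq:sys2}, so that the $\dt\phi$ coefficient collapses exactly to $\tfrac{\mu}{\vtheta}$. Verifying that this cancellation is exact — and that the temperature-dependence of $\gamma\vtheta$ is correctly accounted for in $\dtheta F$ — is the delicate step; once it is confirmed, the remaining integrations by parts and the sign conclusion are mechanical.
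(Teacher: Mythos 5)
Your proposal is correct and takes essentially the same approach as the paper: the paper derives the pointwise entropy equation \eqref{eq:sys3alternative} from the same Gibbs-type identity $\dt s = \tfrac{1}{\vtheta}\dt e - \tfrac{\mu}{\vtheta}\dt\phi - \gamma\div(\dt\phi\nabla\phi)$ and then integrates over the periodic domain, whereas you integrate first and move the divergences by parts afterwards --- the same manipulations in a different order. The cancellations you flag as delicate (the interfacial $\nabla\phi\cdot\nabla\dt\phi$ term recombining with the $-\gamma\Delta\phi$ part of \eqref{eq:sys2}, and $s=-\dtheta F$ eliminating the $\dt\vtheta$ contributions) indeed go through exactly as you anticipate.
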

Before considering the proof of \cref{th:spp} we first derive the entropy equation. Therefore we recall the identity $e=F+\vtheta s$ which after rearrangement yields $s=(e-F)/\vtheta$ and
\begin{align*}
 \dt s(\phi,\nabla\phi,\vtheta) &= \dt \tfrac{e(\phi,\vtheta)-F(\phi,\nabla\phi,\vtheta))}{\vtheta}\notag\\
 & = \tfrac{1}{\vtheta}\dt e(\phi,\vtheta) - \tfrac{1}{\vtheta}\dt F(\phi,\nabla\phi,\vtheta) - \tfrac{e(\phi,\vtheta)-F(\phi,\nabla\phi,\vtheta)}{\vtheta^2}\dt \vtheta \notag\\
 & = \tfrac{1}{\vtheta}\dt e(\phi,\vtheta) - \tfrac{1}{\vtheta}\dphi F(\phi,\nabla\phi,\vtheta)\dt\phi - \tfrac{1}{\vtheta}\dgradphi F(\phi,\nabla\phi,\vtheta)\dt\nabla\phi \notag\\
 &\quad - \tfrac{1}{\vtheta}\dtheta F(\phi,\nabla\phi,\vtheta)\dt\vtheta - \tfrac{s(\phi,\nabla\phi,\vtheta)}{\vtheta}\dt \vtheta. \notag\\
 \intertext{Using the relation $s(\phi,\nabla\phi,\vtheta)=-\dtheta F(\phi,\nabla\phi,\vtheta)$ and substituting in \eqref{eq:helmholtz} leads to}
 \dt s(\phi,\nabla\phi,\vtheta) &= \tfrac{1}{\vtheta}\dt e(\phi,\vtheta) - \tfrac{1}{\vtheta}\dphi f(\phi,\vtheta)\dt\phi - \gamma\nabla\phi\cdot\nabla\dt\phi. \notag\\ 
 \intertext{Insertion of the internal energy equation \eqref{eq:sys3} and the definition of the chemical potential $\tfrac{\mu}{\vtheta}$, cf. \eqref{eq:sys2} then yields}
 \dt s(\phi,\nabla\phi,\vtheta) &= \tfrac{1}{\vtheta}\div(\C\nabla\tfrac{\mu}{\vtheta}-\K\nabla\tfrac{1}{\vtheta}) - \tfrac{\mu}{\vtheta}\dt\phi - \gamma\div(\dt\phi\nabla\phi). \notag\\
 \intertext{Now inserting the Cahn-Hilliard equation cf. \eqref{eq:sys1} implies}
 \dt s(\phi,\nabla\phi,\vtheta) &= \tfrac{1}{\vtheta}\div(\C\nabla\tfrac{\mu}{\vtheta}-\K\nabla\tfrac{1}{\vtheta}) - \tfrac{\mu}{\vtheta}\div(\M\nabla\tfrac{\mu}{\vtheta}-\C\nabla\tfrac{1}{\vtheta})\notag\\
 &\quad- \gamma\div(\dt\phi\nabla\phi). 
\end{align*}
By using the reverse product rule we finally obtain the entropy equation
\begin{align}\label{eq:sys3alternative}
    \dt s(\phi,\nabla\phi,\vtheta) &= \nabla\tfrac{1}{\vtheta}\cdot\K\nabla\tfrac{1}{\vtheta} - 2\nabla\tfrac{1}{\vtheta}\cdot\C\nabla\tfrac{\mu}{\vtheta} + \nabla\tfrac{\mu}{\vtheta}\cdot\M\nabla\tfrac{\mu}{\vtheta} \notag\\
    &- \div\left(\gamma\dt\phi\nabla\phi + \tfrac{1}{\vtheta}\K\nabla\tfrac{1}{\vtheta} - \tfrac{1}{\vtheta}\C\nabla\tfrac{\mu}{\vtheta} - \tfrac{\mu}{\vtheta}\C\nabla\tfrac{1}{\vtheta} + \tfrac{\mu}{\vtheta}\M\nabla\tfrac{\mu}{\vtheta} \right).
\end{align}

\begin{proof}[Proof of \cref{th:spp}]
Conservation of mass and internal energy as well as entropy production simply follow from the respective equations \eqref{eq:sys1},\eqref{eq:sys3} and \eqref{eq:sys3alternative} by integration over the domain and partial integration.
\end{proof}

Note that to obtain the entropy production for system \eqref{eq:sys1}--\eqref{eq:sys3} we need to test with $\tfrac{1}{\vtheta}$ which is not directly amendable for discretizations via finite elements. However, using the entropy equation instead of the internal energy equation implies that we only need to test with $\vtheta$ to get the energy conservation, while  entropy production follows by simple integration over the domain and using partial integration for the flux.

System \eqref{eq:sys1},\eqref{eq:sys2},\eqref{eq:sys3alternative} is equivalent to \eqref{eq:sys1}--\eqref{eq:sys3} and therefore also satisfies conservation of mass and internal energy as well as entropy production.

\section{Variational structure}\label{sec:Var}

In the this section we will reformulate system \eqref{eq:sys1}, \eqref{eq:sys2}, \eqref{eq:sys3alternative} into a variational formulation suitably such that conservation of mass and energy as well as entropy production follows directly by insertion of suitable test functions. 

\begin{lemma}
    Regular solutions $(\phi,\mu,\vtheta)$ to system \eqref{eq:sys1}, \eqref{eq:sys2}, \eqref{eq:sys3alternative} satisfy the variational formulation 
    \begin{align}
        \la\dt\phi,\psi\ra&=\la\M\tfrac{\mu}{\vtheta^2}\nabla \vtheta-\tfrac{\M}{\vtheta}\nabla\mu-\tfrac{\C}{\vtheta^2}\nabla\vtheta,\nabla\psi\ra\label{eq:var1}\\
        \la\mu,\xi\ra&=\gamma\la \nabla\phi,\vtheta\nabla\xi+\xi\nabla \vtheta\ra+\la\dphi f,\xi\ra\label{eq:var2}\\
        \la\dt s,\omega\ra&=\la\tfrac{\K}{\vtheta^2}\nabla \vtheta+\tfrac{\C}{\vtheta}\nabla\mu-2\C\tfrac{\mu}{\vtheta^2}\nabla \vtheta-\M\tfrac{\mu}{\vtheta}\nabla\mu+\M\left(\tfrac{\mu}{\vtheta}\right)^2\nabla \vtheta,\tfrac{\omega}{\vtheta^2}\nabla \vtheta-\tfrac{1}{\vtheta}\nabla\omega\ra \notag\\
        &\quad+\la\tfrac{\C}{\vtheta^2}\nabla\vtheta+\tfrac{\M}{\vtheta}\nabla\mu-\M\tfrac{\mu}{\vtheta^2}\nabla \vtheta,\tfrac{\omega}{\vtheta}\nabla\mu\ra +\gamma\la\dt\phi\nabla\phi,\nabla\omega\ra
        \label{eq:var3}
    \end{align}
    for smooth test functions $\psi,\xi,\omega.$
\end{lemma}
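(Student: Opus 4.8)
The plan is to derive \eqref{eq:var1}--\eqref{eq:var3} one equation at a time: in each case I test the corresponding strong-form equation from \eqref{eq:sys1}, \eqref{eq:sys2}, \eqref{eq:sys3alternative} with the appropriate (possibly $\vtheta$-weighted) test function, integrate over $\Omega$, and integrate by parts in the divergence terms. Since $\Omega$ is periodic no boundary terms appear. The only analytic ingredient beyond this is the elementary quotient/product rules $\nabla\tfrac{1}{\vtheta}=-\tfrac{1}{\vtheta^2}\nabla\vtheta$ and $\nabla\tfrac{\mu}{\vtheta}=\tfrac{1}{\vtheta}\nabla\mu-\tfrac{\mu}{\vtheta^2}\nabla\vtheta$, which I apply repeatedly to convert derivatives of the quotients $\tfrac{\mu}{\vtheta},\tfrac{1}{\vtheta}$ into derivatives of the primitive variables $\mu,\vtheta$.

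The first two identities are short. For \eqref{eq:var1} I would test \eqref{eq:sys1} with $\psi$ and integrate the divergence by parts to get $\la\dt\phi,\psi\ra=-\la\M\nabla\tfrac{\mu}{\vtheta}-\C\nabla\tfrac{1}{\vtheta},\nabla\psi\ra$; inserting the two quotient rules and collecting terms yields exactly the right-hand side of \eqref{eq:var1}. For \eqref{eq:var2} I would first multiply \eqref{eq:sys2} by $\vtheta$ to obtain $\mu=-\gamma\vtheta\Delta\phi+\dphi f$, test with $\xi$, and integrate the Laplacian term by parts using $\nabla(\vtheta\xi)=\xi\nabla\vtheta+\vtheta\nabla\xi$, which reproduces \eqref{eq:var2} directly.

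The substantive part is \eqref{eq:var3}. Testing \eqref{eq:sys3alternative} with $\omega$ and integrating the entropy flux by parts produces a bulk contribution $\la\omega,\nabla\tfrac{1}{\vtheta}\cdot\K\nabla\tfrac{1}{\vtheta}-2\nabla\tfrac{1}{\vtheta}\cdot\C\nabla\tfrac{\mu}{\vtheta}+\nabla\tfrac{\mu}{\vtheta}\cdot\M\nabla\tfrac{\mu}{\vtheta}\ra$ together with $\la\nabla\omega,\ \gamma\dt\phi\nabla\phi+\tfrac{1}{\vtheta}\K\nabla\tfrac{1}{\vtheta}-\tfrac{1}{\vtheta}\C\nabla\tfrac{\mu}{\vtheta}-\tfrac{\mu}{\vtheta}\C\nabla\tfrac{1}{\vtheta}+\tfrac{\mu}{\vtheta}\M\nabla\tfrac{\mu}{\vtheta}\ra$. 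I would abbreviate the two thermodynamic fluxes $q_1:=\K\nabla\tfrac{1}{\vtheta}-\C\nabla\tfrac{\mu}{\vtheta}$ and $q_2:=\M\nabla\tfrac{\mu}{\vtheta}-\C\nabla\tfrac{1}{\vtheta}$. Using the symmetry of $\C$ inherited from the Onsager matrix $\LL$, the bulk integrand rewrites as $\nabla\tfrac{1}{\vtheta}\cdot q_1+\nabla\tfrac{\mu}{\vtheta}\cdot q_2$ (this is exactly where symmetry is needed, to split the cross term $-2\nabla\tfrac{1}{\vtheta}\cdot\C\nabla\tfrac{\mu}{\vtheta}$ symmetrically) and the flux integrand as $\gamma\dt\phi\nabla\phi+\tfrac{1}{\vtheta}q_1+\tfrac{\mu}{\vtheta}q_2$. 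Collecting the $q_1$- and $q_2$-terms and invoking the product-rule identities $\omega\nabla\tfrac{1}{\vtheta}+\tfrac{1}{\vtheta}\nabla\omega=\nabla\tfrac{\omega}{\vtheta}$ and $\omega\nabla\tfrac{\mu}{\vtheta}+\tfrac{\mu}{\vtheta}\nabla\omega=\nabla\tfrac{\mu\omega}{\vtheta}=\tfrac{\omega}{\vtheta}\nabla\mu+\mu\nabla\tfrac{\omega}{\vtheta}$, everything reorganises into $\la q_1+\mu q_2,\nabla\tfrac{\omega}{\vtheta}\ra+\la q_2,\tfrac{\omega}{\vtheta}\nabla\mu\ra+\gamma\la\dt\phi\nabla\phi,\nabla\omega\ra$.

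It then remains to re-expand $q_1+\mu q_2$ and $q_2$ via the quotient rules and match them against the vector fields in \eqref{eq:var3}, noting that the factor there is $\tfrac{\omega}{\vtheta^2}\nabla\vtheta-\tfrac{1}{\vtheta}\nabla\omega=-\nabla\tfrac{\omega}{\vtheta}$; a direct expansion shows $q_2$ equals the second field and $-(q_1+\mu q_2)$ equals the first, where in particular the terms $\M(\tfrac{\mu}{\vtheta})^2\nabla\vtheta$ and $-2\C\tfrac{\mu}{\vtheta^2}\nabla\vtheta$ arise from the $\M$-part of $\mu q_2$ and from combining the $\C$-contributions of $q_1$ and $\mu q_2$, respectively. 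I expect the only real obstacle to be organisational: faithfully tracking the many terms with different powers of $\vtheta$ and verifying that integration by parts of the flux, combined with the bulk production, reassembles exactly into the two inner products. The two product-rule identities for $\nabla\tfrac{\omega}{\vtheta}$ and $\nabla\tfrac{\mu\omega}{\vtheta}$ together with the symmetry of $\C$ are precisely the structural facts that keep this bookkeeping clean.
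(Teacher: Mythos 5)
Your proposal is correct and follows essentially the same route as the paper: test \eqref{eq:sys1}, \eqref{eq:sys2}, \eqref{eq:sys3alternative} with $\psi$, $\vtheta\xi$ and $\omega$ respectively, integrate by parts using periodicity, and convert $\nabla\tfrac{1}{\vtheta}$, $\nabla\tfrac{\mu}{\vtheta}$ via the quotient rules. Your only departure is presentational: grouping the terms into the fluxes $q_1,q_2$ and using $\nabla\tfrac{\omega}{\vtheta}$, $\nabla\tfrac{\mu\omega}{\vtheta}$ (with the symmetry of $\C$) tidies the bookkeeping that the paper carries out by direct expansion and rearrangement, and your expansions of $-(q_1+\mu q_2)$ and $q_2$ do reproduce the two vector fields in \eqref{eq:var3} exactly.
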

\begin{proof}
First we multiply the equations \eqref{eq:sys1}, \eqref{eq:sys2} and \eqref{eq:sys3alternative} by the functions $\psi,\vtheta\xi$ and $\omega$, respectively, and integrate over the domain $\Omega$ to obtain
\begin{align*}
    \la\dt\phi,\psi\ra&=\la\div(\M\nabla\tfrac{\mu}{\vtheta}-\C\nabla\tfrac{1}{\vtheta}),\psi\ra\\
    \la\mu,\xi\ra&=-\gamma\la\Delta\phi,\vtheta\xi\ra+\la\dphi f,\xi\ra\\
    \la\dt s,\omega\ra&=\la\nabla\tfrac{1}{\vtheta}\cdot\K\nabla\tfrac{1}{\vtheta} - 2\nabla\tfrac{1}{\vtheta}\cdot\C\nabla\tfrac{\mu}{\vtheta} + \nabla\tfrac{\mu}{\vtheta}\cdot\M\nabla\tfrac{\mu}{\vtheta},\omega\ra \notag\\
    &\quad- \la\div\left(\gamma\dt\phi\nabla\phi + \tfrac{1}{\vtheta}\K\nabla\tfrac{1}{\vtheta} - \tfrac{1}{\vtheta}\C\nabla\tfrac{\mu}{\vtheta} - \tfrac{\mu}{\vtheta}\C\nabla\tfrac{1}{\vtheta} + \tfrac{\mu}{\vtheta}\M\nabla\tfrac{\mu}{\vtheta} \right),\omega\ra.
\end{align*}
In the first equation we use partial integration and derive
\begin{align*}
    \la\partial_t\phi,\psi\ra&=-\la \M\nabla\tfrac{\mu}{\vtheta}-\C\nabla\tfrac{1}{\vtheta},\nabla\psi\ra.
\end{align*}
Applying the chain rule to $\nabla\tfrac{\mu}{\vtheta}$ and $\nabla\tfrac{1}{\vtheta}$ we immediately get \eqref{eq:var1}. 
In the second equation we use partial integration
\begin{align*}
    \la\mu,\xi\ra&=\gamma\la\nabla\phi,\nabla(\xi \vtheta)\ra+\la\dphi f,\xi\ra
\end{align*}
and apply the product rule to get \eqref{eq:var2}.
Finally, in the third equation we again use partial integration
\begin{align*}
     \la\partial_ts,\omega\ra&=\la\nabla\tfrac{1}{\vtheta}\cdot\K\nabla\tfrac{1}{\vtheta} - 2\nabla\tfrac{1}{\vtheta}\cdot\C\nabla\tfrac{\mu}{\vtheta} + \nabla\tfrac{\mu}{\vtheta}\cdot\M\nabla\tfrac{\mu}{\vtheta},\omega\ra \notag\\
    &\quad+ \la\gamma\dt\phi\nabla\phi + \tfrac{1}{\vtheta}\K\nabla\tfrac{1}{\vtheta} - \tfrac{1}{\vtheta}\C\nabla\tfrac{\mu}{\vtheta} - \tfrac{\mu}{\vtheta}\C\nabla\tfrac{1}{\vtheta} + \tfrac{\mu}{\vtheta}\M\nabla\tfrac{\mu}{\vtheta},\nabla\omega\ra.
    \intertext{Combining with the chain rule for the fraction terms we get}
    \la\dt s,\omega\ra&=\la\tfrac{\K}{\vtheta^2}\nabla\vtheta + 2\tfrac{\C}{\vtheta}\nabla\mu-2\C\tfrac{\mu}{\vtheta^2}\nabla\vtheta - 2\M\tfrac{\mu}{\vtheta}\nabla\mu + \M\left(\tfrac{\mu}{\vtheta}\right)^2\nabla\vtheta,\tfrac{\omega}{\vtheta^2}\nabla\vtheta\ra\\ &\quad+ \la\tfrac{\M}{\vtheta}\nabla\mu,\tfrac{\omega}{\vtheta}\nabla\mu\ra + \la\gamma\dt\phi\nabla\phi,\nabla\omega\ra\notag\\
    &\quad+ \la-\tfrac{\K}{\vtheta^2}\nabla\vtheta - \tfrac{\C}{\vtheta}\nabla\mu + 2\C\tfrac{\mu}{\vtheta^2}\nabla\vtheta + \M\tfrac{\mu}{\vtheta}\nabla\mu - \M\left(\tfrac{\mu}{\vtheta}\right)^2\nabla\vtheta,\tfrac{1}{\vtheta}\nabla\omega\ra,
\end{align*}
which after rearrangement yields \eqref{eq:var3}.
\end{proof}
\begin{theorem}\label{th:varspp}
    Regular solutions $(\phi,\mu,\vtheta)$ to the variational formulation \eqref{eq:var1}--\eqref{eq:var3} satisfy
    \begin{align*}
        \la \dt\phi,1 \ra = \la \dt e,1 \ra = 0, \quad \la \dt s,1 \ra =\mathcal{D}\left(\tfrac{\mu}{\vtheta},\tfrac{1}{\vtheta}\right)\geq0.
  \end{align*}  
  Moreover, these properties follow directly by using test functions $\psi\in\{1,\mu\}, \xi=\dt\phi, w\in\{1,\vtheta\}.$
\end{theorem}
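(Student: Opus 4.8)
The plan is to establish the three identities one at a time by inserting the indicated test functions and tracking how the gradient coupling terms annihilate. The mass identity is immediate: choosing $\psi = 1$ in \eqref{eq:var1} kills the whole right-hand side, since every term there carries a factor $\nabla\psi = 0$, and hence $\la\dt\phi, 1\ra = 0$.

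For entropy production I would set $\omega = 1$ in \eqref{eq:var3}. Every contribution proportional to $\nabla\omega$ then vanishes, and the surviving two inner products are paired against $\tfrac{1}{\vtheta^2}\nabla\vtheta$ and $\tfrac{1}{\vtheta}\nabla\mu$. Reversing the chain-rule bookkeeping used to derive \eqref{eq:var3} from \eqref{eq:sys3alternative}, i.e. substituting $\nabla\tfrac{1}{\vtheta} = -\tfrac{1}{\vtheta^2}\nabla\vtheta$ and $\nabla\tfrac{\mu}{\vtheta} = \tfrac{1}{\vtheta}\nabla\mu - \tfrac{\mu}{\vtheta^2}\nabla\vtheta$, recombines them into $\int_\Omega \nabla\tfrac{1}{\vtheta}\cdot\K\nabla\tfrac{1}{\vtheta} - 2\nabla\tfrac{1}{\vtheta}\cdot\C\nabla\tfrac{\mu}{\vtheta} + \nabla\tfrac{\mu}{\vtheta}\cdot\M\nabla\tfrac{\mu}{\vtheta}$, which is exactly $\mathcal{D}(\tfrac{\mu}{\vtheta},\tfrac{1}{\vtheta})$. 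Non-negativity then follows at once from the assumed positive definiteness of the Onsager matrix $\LL$.

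Energy conservation is the step I expect to be the crux, since $\la\dt e, 1\ra$ does not appear directly in \eqref{eq:var1}--\eqref{eq:var3}. I would first reconstruct it from the thermodynamic relation $e = F + \vtheta s$. Differentiating in time exactly as in the computation preceding \cref{th:spp} gives the pointwise identity $\dt e = \vtheta\dt s + \dphi f\,\dt\phi + \gamma\vtheta\nabla\phi\cdot\nabla\dt\phi$, and integrating over $\Omega$ yields
\begin{align*}
\la\dt e, 1\ra = \la\dt s, \vtheta\ra + \la\dphi f, \dt\phi\ra + \gamma\la\vtheta\nabla\phi, \nabla\dt\phi\ra.
\end{align*}
The three terms on the right are precisely what the prescribed test functions produce: $\omega = \vtheta$ in \eqref{eq:var3} for the first, $\xi = \dt\phi$ in \eqref{eq:var2} for the second, and finally $\psi = \mu$ in \eqref{eq:var1}.

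The remaining work is a cascade of cancellations. Testing \eqref{eq:var2} with $\xi = \dt\phi$ rewrites $\la\dphi f, \dt\phi\ra$ as $\la\mu, \dt\phi\ra$ minus the two $\gamma$-coupling terms $\gamma\la\vtheta\nabla\phi, \nabla\dt\phi\ra$ and $\gamma\la\dt\phi\nabla\phi, \nabla\vtheta\ra$. Setting $\omega = \vtheta$ in \eqref{eq:var3} collapses its first bracket, because $\tfrac{\vtheta}{\vtheta^2}\nabla\vtheta - \tfrac{1}{\vtheta}\nabla\vtheta = 0$, and, after reversing the chain rule, identifies the second bracket as the Cahn-Hilliard flux $\M\nabla\tfrac{\mu}{\vtheta} - \C\nabla\tfrac{1}{\vtheta}$, so that $\la\dt s, \vtheta\ra = \la\M\nabla\tfrac{\mu}{\vtheta} - \C\nabla\tfrac{1}{\vtheta}, \nabla\mu\ra + \gamma\la\dt\phi\nabla\phi, \nabla\vtheta\ra$. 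Assembling these substitutions, all four $\gamma$-terms pair up and vanish, leaving $\la\dt e, 1\ra = \la\M\nabla\tfrac{\mu}{\vtheta} - \C\nabla\tfrac{1}{\vtheta}, \nabla\mu\ra + \la\mu, \dt\phi\ra$. The final step is to test \eqref{eq:var1} with $\psi = \mu$, whose flux is exactly $-(\M\nabla\tfrac{\mu}{\vtheta} - \C\nabla\tfrac{1}{\vtheta})$; this gives $\la\M\nabla\tfrac{\mu}{\vtheta} - \C\nabla\tfrac{1}{\vtheta}, \nabla\mu\ra = -\la\mu, \dt\phi\ra$, which cancels the surviving term and yields $\la\dt e, 1\ra = 0$. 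I expect no genuine analytic obstacle here; the only real difficulty is disciplined bookkeeping of the chain-rule rewrites so that each gradient coupling finds its cancelling partner.
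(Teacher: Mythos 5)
Your proposal is correct and follows essentially the same route as the paper's own proof: mass conservation via $\psi=1$, entropy production via $\omega=1$ with the reverse chain-rule recombination into $\mathcal{D}(\tfrac{\mu}{\vtheta},\tfrac{1}{\vtheta})$, and energy conservation by expanding $\dt e$ through $e=F+\vtheta s$, $s=-\dtheta F$ and then cancelling terms with the test functions $\xi=\dt\phi$, $\omega=\vtheta$, $\psi=\mu$. The only (cosmetic) difference is that you keep the flux in the compact form $\M\nabla\tfrac{\mu}{\vtheta}-\C\nabla\tfrac{1}{\vtheta}$ where the paper writes out the expanded gradients, but the cancellation structure is identical.
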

\begin{proof}
    By setting $\psi=1$ we immediately find 
    \begin{equation*}
        \la \dt\phi,1 \ra =\la\M\tfrac{\mu}{\vtheta^2}\nabla \vtheta-\tfrac{\M}{\vtheta}\nabla\mu-\tfrac{\C}{\vtheta^2}\nabla\vtheta,\nabla 1\ra=0.
    \end{equation*}
    The entropy production follows by using the test function $\omega=1$:
    \begin{align*}
        \la\dt s,1\ra&=\la\tfrac{\K}{\vtheta^2}\nabla \vtheta+\tfrac{\C}{\vtheta}\nabla\mu-2\C\tfrac{\mu}{\vtheta^2}\nabla\vtheta-\M\tfrac{\mu}{\vtheta}\nabla\mu+\M\left(\tfrac{\mu}{\vtheta}\right)^2\nabla \vtheta,\tfrac{1}{\vtheta^2}\nabla \vtheta\ra\\
        &\quad+\la\tfrac{\C}{\vtheta^2}\nabla\vtheta+\tfrac{\M}{\vtheta}\nabla\mu-\M\tfrac{\mu}{\vtheta^2}\nabla \vtheta,\tfrac{1}{\vtheta}\nabla\mu\ra\\
        &=\la\K\tfrac{1}{\vtheta^2}\nabla\vtheta,\tfrac{1}{\vtheta^2}\nabla\vtheta\ra - 2\la\C\left(\tfrac{\mu}{\vtheta^2}\nabla\vtheta-\tfrac{1}{\vtheta}\nabla\mu\right),\tfrac{1}{\vtheta^2}\nabla\vtheta\ra\\
        &\quad+ \la\M\left(\tfrac{\mu}{\vtheta^2}\nabla\vtheta-\tfrac{1}{\vtheta}\nabla\mu\right),\tfrac{\mu}{\vtheta^2}\nabla\vtheta-\tfrac{1}{\vtheta}\nabla\mu\ra\\
        &=\la\K\nabla\tfrac{1}{\vtheta},\nabla\tfrac{1}{\vtheta}\ra - 2\la\C\nabla\tfrac{\mu}{\vtheta},\nabla\tfrac{1}{\vtheta}\ra + \la\M\nabla\tfrac{\mu}{\vtheta},\nabla\tfrac{\mu}{\vtheta}\ra\\
        &=\mathcal{D}\left(\tfrac{\mu}{\vtheta},\tfrac{1}{\vtheta}\right)\geq0.
    \end{align*}
    Using the identities $e=F+\vtheta s$ and $s=-\dtheta F$, energy conservation follows:
    \begin{align*}
        \la\dt e,1\ra&=\la\dt(F+\vtheta s),1\ra\\
        &=\la\dphi F\dt\phi+\dtheta F\dt \vtheta+\dt \vtheta s+\vtheta \dt s,1\ra\\
        &=\la\gamma \vtheta\nabla\phi\cdot\nabla\dt\phi+\dphi f \dt\phi+\vtheta \dt s,1\ra\\
        &=\gamma\la \nabla\phi,\vtheta\nabla\dt\phi\ra+\la\dphi f,\dt\phi\ra+\la\dt s,\vtheta\ra.\\
        \intertext{Taking $\xi=\dt\phi$ in \eqref{eq:var2} and $\omega=\vtheta$ in \eqref{eq:var3} we further get}
        \la\dt e,1\ra&=\la\mu,\dt\phi\ra-\gamma\la\nabla\phi,\dt\phi\nabla \vtheta\ra+\gamma\la\dt\phi\nabla\phi,\nabla \vtheta\ra\\
        &\quad+\la\tfrac{\C}{\vtheta^2}\nabla\vtheta+\tfrac{\M}{\vtheta}\nabla\mu-\M\tfrac{\mu}{\vtheta^2}\nabla \vtheta,\nabla\mu\ra \\
        \intertext{and using \eqref{eq:var1} with $\psi=\mu$ we finally obtain}
        \la\dt e,1\ra&=\la\M\tfrac{\mu}{\vtheta^2}\nabla \vtheta-\tfrac{\M}{\vtheta}\nabla\mu-\tfrac{\C}{\vtheta^2}\nabla\vtheta,\nabla\mu\ra\\
        &\quad+\la\tfrac{\C}{\vtheta^2}\nabla\vtheta+\tfrac{\M}{\vtheta}\nabla\mu-\M\tfrac{\mu}{\vtheta^2}\nabla \vtheta,\nabla\mu\ra=0.
    \end{align*}
\end{proof}

\section{Fully discrete scheme for the non-isothermal Cahn-Hilliard equation}\label{sec:fulldiscrete}
As preparatory step, we introduce the notation and assumptions that will be used in our discretisation strategy. 

\subsection*{Time discretization}
We consider a uniform partition of the time interval $[0,T]$ with time step size $\tau>0$ and define the discrete time grid $\Itau:=\{0=t^0,t^1=\tau,\ldots, t^{n_\tau}=T\}$, where $n_\tau=\tfrac{T}{\tau}$ denotes the absolute number of time steps. We introduce $\Pi^1_c(\Itau;X), \Pi^0(\Itau;X)$ as the spaces of continuous piecewise linear and piecewise constant functions on $\Itau$ with values in a suitable Hilbert space $X$, respectively. For a function $g\in \Pi^1_c(\Itau;X),\Pi^0(\Itau;X)$, we use the notation $g^{n+1},g^n,g^*$ to refer to its evaluation at $t^{n+1},t^n$, and at some intermediate time $t^*\in[t^n,t^{n+1}]$, respectively. The time difference and the backward finite difference quotient (discrete time derivative) are defined by
\begin{equation*}
	d^{n+1}g = g^{n+1} - g^n, \qquad d^{n+1}_\tau g = \frac{g^{n+1}-g^n}{\tau},
\end{equation*}
respectively.

\subsection*{Space discretization}

For the spatial discretization, we consider a geometrically conforming triangulation $\Th$ of the domain $\Omega$ into simplices with maximal diameter $h.$ The mesh is assumed to be periodically extendable to match the periodic structure of $\Omega$. 
We define the space of continuous, piecewise linear finite element functions over $\Th$ by
\begin{align*}
	\Vh &:= \{v \in H^1(\Omega)\cap C^0(\bar\Omega) : v|_K \in P_1(K),~\forall K \in \Th\}.
\end{align*}

For $\phi_h,\vtheta_h\in\Pi^1_c(\Itau;\Vh)$ and $n\in\{0,...,n_\tau-1\}$ we adopt a convex-concave splitting with respect to the phase variable $\phi$ of the potential $f$, and define the discrete approximation of its derivative as
\begin{equation*}
	f_{\phi}(\phi_h^{n+1},\phi_h^n,\vtheta_h^{n+1}) := \partial_\phi f_{\text{vex}}(\phi_h^{n+1},\vtheta_h^{n+1}) + \partial_\phi 
 f_{\text{cav}}(\phi_h^{n},\vtheta_h^{n+1}).   
\end{equation*}
For the components of the Onsager matrix $\mathbf{X}\in\{\M,\K,\C\}$ we denote by $\mathbf{X}_h^*$ the evaluation at $(\phi_h^*,\nabla\phi_h^*,\vtheta_h^*)$ for some intermediate time $t^*\in[t^n,t^{n+1}]$, i.e. $\mathbf{X}_h^*=\mathbf{X}(\phi_h^*,\nabla\phi_h^*,\vtheta_h^*)$.

We now formulate a fully discrete time-stepping method for the non-isothermal Cahn–Hilliard system \eqref{eq:sys1}, \eqref{eq:sys2}, and \eqref{eq:sys3alternative}.

\begin{problem}\label{prob:fulldisc}
	Let the initial data $(\phi_{h}^0,\vtheta_{h}^0)\in \Vh\times\Vh$ be given. Find $(\phi_h,\vtheta_h)\in \Pi^1_c(\Itau;\Vh\times\Vh)$ and $\mu_{h}\in \Pi^0(\Itau;\Vh)$ such that
    \begin{align}
        \la\dtau\phi_h,\psi_h\ra&=\la\tfrac{\mu_h^*}{\vtheta_h^*}\M_h^*\nabla\vtheta_h^{n+1}-\M_h^*\nabla\mu_h^{n+1}-\tfrac{1}{\vtheta_h^*}\C_h^*\nabla\vtheta_h^{n+1},\tfrac{1}{\vtheta_h^{n+1}}\nabla\psi_h\ra\label{eq:dis1}\\
        \la\mu_h^{n+1},\xi_h\ra&=\gamma\la\nabla\phi_h^{n+1},\vtheta_h^{n+1}\nabla\xi_h\ra+\gamma\la\nabla\phi_h^*,\xi_h\nabla \vtheta_h^{n+1}\ra \notag\\
        &\quad+\la f_{\phi}(\phi_h^{n+1},\phi_h^n,\vtheta_h^{n+1}),\xi_h\ra\label{eq:dis2}\\
        \la\dtau s_h,\omega_h\ra&=\la\tfrac{1}{\vtheta_h^*}\K_h^*\nabla \vtheta_h^{n+1}+\C_h^*\nabla\mu_h^{n+1}-2\tfrac{\mu_h^*}{\vtheta_h^*}\C_h^*\nabla\vtheta_h^{n+1}-\mu_h^*\M_h^*\nabla\mu_h^{n+1}\notag\\
        &\quad+\tfrac{(\mu_h^*)^2}{\vtheta_h^*}\M_h^*\nabla\vtheta_h^{n+1},\tfrac{\omega_h}{(\vtheta_h^{n+1})^2\vtheta_h^*}\nabla\vtheta_h^{n+1}-\tfrac{1}{\vtheta_h^*\vtheta_h^{n+1}}\nabla\omega_h\ra \notag\\
        &\quad+\la\tfrac{1}{\vtheta_h^*}\C_h^*\nabla\vtheta_h^{n+1}+\M_h^*\nabla\mu_h^{n+1}-\tfrac{\mu_h^*}{\vtheta_h^*}\M_h^*\nabla\vtheta_h^{n+1},\tfrac{\omega_h}{(\vtheta_h^{n+1})^2}\nabla\mu_h^{n+1}\ra\notag\\
        &\quad+\gamma\la\nabla\phi_h^*,\dtau\phi_h\nabla\omega_h\ra\label{eq:dis3}
	\end{align}
	holds for $(\psi_h,\xi_h,\omega_h)\in\Vh\times\Vh\times\Vh$ and all $0\leq n\leq n_T-1$.
\end{problem}

\section{Structure-preserving property}\label{sec:structurepresurving}

In this section, we establish the existence of at least one solution to \cref{prob:fulldisc}, and we demonstrate that the numerical method \eqref{eq:dis1}–\eqref{eq:dis3} preserves key structural properties at the discrete level—analogous to those described in \cref{th:varspp}. In particular, we verify discrete analogues of mass conservation, energy dissipation, and entropy production.

\begin{theorem}\label{th:disspp}
	Let $(\phi_h,\mu_{h},\vtheta_h)$ be any discrete solution of \cref{prob:fulldisc} with $\vtheta_h>0.$ Then the scheme preserves the following structural properties:
	\begin{align*}
\langle d^{n+1}_\tau \phi_h, 1 \rangle &= 0, \qquad & &\text{(discrete mass conservation)} \\
\langle d^{n+1}_\tau e_h, 1 \rangle &= \mathcal{D}^{n+1}_{\mathrm{num}} \leq 0, \qquad & &\text{(discrete energy dissipation)} \\
\langle d^{n+1}_\tau s_h, 1 \rangle &= \mathcal{D}_h^{n+1}\geq 0, \qquad & &\text{(discrete entropy production)}
\end{align*}
where $e_h$ and $s_h$ denote the discrete internal energy and entropy, respectively. The discrete entropy production reads
    \begin{equation*}
        \mathcal{D}_h^{n+1} =\int_\Omega \begin{pmatrix}
            \tfrac{\nabla\vtheta_h^{n+1}}{\vtheta_h^{n+1}\cdot\vtheta_h^*} &
             \tfrac{\mu_h^*}{\vtheta_h^{n+1}\cdot\vtheta_h^*}\nabla\vtheta_h^{n+1}-\tfrac{\nabla\mu_h^{n+1}}{\vtheta_h^{n+1}}
        \end{pmatrix}\begin{pmatrix}
            \K_h^* & -\C_h^* \\
            -\C_h^* & \M_h^*
        \end{pmatrix}\begin{pmatrix}
            \tfrac{\nabla\vtheta_h^{n+1}}{\vtheta_h^{n+1}\cdot\vtheta_h^*} \\
             \tfrac{\mu_h^*}{\vtheta_h^{n+1}\cdot\vtheta_h^*}\nabla\vtheta_h^{n+1}-\tfrac{\nabla\mu_h^{n+1}}{\vtheta_h^{n+1}}
        \end{pmatrix}
    \end{equation*}
    and the numerical dissipation is given as follows
    \begin{align*}
	 	\mathcal{D}_{num}^{n+1} = -\tfrac{\tau}{2}&\Big(\gamma\la\snorm{\nabla d_\tau^{n+1}\phi_h}^2,\vtheta_h^{n+1}\ra-\la\partial_{\vtheta\vtheta}f(\phi_h^{n},\zeta_1),(d_\tau^{n+1} \vtheta_h)^2\ra\\&+\la\partial_{\phi\phi}f_{\text{vex}}(\zeta_2,\vtheta_h^{{n+1}})-\partial_{\phi\phi}f_{\text{cav}}(\zeta_3,\vtheta_h^{{n+1}}),(d_\tau^{n+1}\phi_h)^2\ra\Big).
    \end{align*}
\end{theorem}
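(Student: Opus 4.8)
The plan is to reproduce, at the discrete level, the three computations carried out in the proof of \cref{th:varspp}, replacing each continuous test function by its discrete counterpart and keeping careful track of the quadratic remainders produced by the time stepping. Mass conservation is immediate: taking $\psi_h=1$ in \eqref{eq:dis1} kills the entire right-hand side because $\nabla 1=0$, giving $\la\dtau\phi_h,1\ra=0$. For the entropy production I would insert $\omega_h=1$ into \eqref{eq:dis3}; every term carrying $\nabla\omega_h$ drops out, leaving the two contributions paired with $\tfrac{1}{\vtheta_h^{n+1}\vtheta_h^{n+1}\vtheta_h^*}\nabla\vtheta_h^{n+1}$ and $\tfrac{1}{\vtheta_h^{n+1}\vtheta_h^{n+1}}\nabla\mu_h^{n+1}$. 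Collecting these and factoring out the two discrete flux vectors $\tfrac{\nabla\vtheta_h^{n+1}}{\vtheta_h^{n+1}\vtheta_h^*}$ and $\tfrac{\mu_h^*}{\vtheta_h^{n+1}\vtheta_h^*}\nabla\vtheta_h^{n+1}-\tfrac{\nabla\mu_h^{n+1}}{\vtheta_h^{n+1}}$ reassembles precisely the quadratic form $\mathcal{D}_h^{n+1}$ built from $\K_h^*,\C_h^*,\M_h^*$, exactly as in the rearrangement of \cref{th:varspp}; its nonnegativity is then inherited from the positive (semi-)definiteness of the Onsager matrix $\LL$.

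The energy balance is the substantial part. I would start from the nodal relations $e_h=F(\phi_h,\vtheta_h)+\vtheta_h s_h$ and $s_h=-\dtheta F(\phi_h,\vtheta_h)$, note that the gradient contributions cancel so that $e_h=f-\vtheta_h\dtheta f$ carries no gradient energy, and then write $\tau\la\dtau e_h,1\ra=\la F_h^{n+1}-F_h^n,1\ra+\la\vtheta_h^{n+1}s_h^{n+1}-\vtheta_h^n s_h^n,1\ra$. For the product term I would apply the discrete Leibniz rule $\vtheta_h^{n+1}s_h^{n+1}-\vtheta_h^n s_h^n=\vtheta_h^{n+1}(s_h^{n+1}-s_h^n)+s_h^n(\vtheta_h^{n+1}-\vtheta_h^n)$, so that the first summand equals $\tau\la\dtau s_h,\vtheta_h^{n+1}\ra$ and is replaced by the right-hand side of \eqref{eq:dis3} with $\omega_h=\vtheta_h^{n+1}$, while the second summand is carried along.

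The three remainders then emerge as genuine second differences. Combining the gradient part of $F_h^{n+1}-F_h^n$ with the gradient part of $s_h^n(\vtheta_h^{n+1}-\vtheta_h^n)$ collapses to $\tfrac{\gamma}{2}\la\vtheta_h^{n+1},\snorm{\nabla\phi_h^{n+1}}^2-\snorm{\nabla\phi_h^n}^2\ra$, and writing $\nabla\phi_h^n=\nabla\phi_h^{n+1}-\tau\nabla\dtau\phi_h$ splits this into the leading coupling $\gamma\la\nabla\phi_h^{n+1},\vtheta_h^{n+1}\nabla\dtau\phi_h\ra$ plus the remainder $-\tfrac{\gamma\tau}{2}\la\snorm{\nabla\dtau\phi_h}^2,\vtheta_h^{n+1}\ra$. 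For the bulk potential, the $\vtheta$-part combines into the second difference $f(\phi_h^n,\vtheta_h^{n+1})-f(\phi_h^n,\vtheta_h^n)-\dtheta f(\phi_h^n,\vtheta_h^n)(\vtheta_h^{n+1}-\vtheta_h^n)$, which Taylor's theorem turns into $\tfrac12\partial_{\vtheta\vtheta}f(\phi_h^n,\zeta_1)(\vtheta_h^{n+1}-\vtheta_h^n)^2$; the $\phi$-part, with the convex piece expanded about $\phi_h^{n+1}$ and the concave piece about $\phi_h^n$, reproduces the scheme's flux $f_\phi(\phi_h^{n+1},\phi_h^n,\vtheta_h^{n+1})$ at leading order and leaves $\tfrac12[\partial_{\phi\phi}f_{\text{cav}}(\zeta_3,\vtheta_h^{n+1})-\partial_{\phi\phi}f_{\text{vex}}(\zeta_2,\vtheta_h^{n+1})](\phi_h^{n+1}-\phi_h^n)^2$. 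Dividing by $\tau$ and using $(\cdot)^{n+1}-(\cdot)^n=\tau\dtau(\cdot)$ turns these three into exactly the contents of $\mathcal{D}_{num}^{n+1}$.

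It remains to verify that all leading-order terms cancel. Testing \eqref{eq:dis2} with $\xi_h=\dtau\phi_h$ converts $\gamma\la\nabla\phi_h^{n+1},\vtheta_h^{n+1}\nabla\dtau\phi_h\ra+\la f_\phi,\dtau\phi_h\ra$ into $\la\mu_h^{n+1},\dtau\phi_h\ra$ minus the second gradient coupling, which is annihilated by the $\gamma\la\dtau\phi_h\nabla\phi_h^*,\nabla\vtheta_h^{n+1}\ra$ term already produced by \eqref{eq:dis3}; finally, testing \eqref{eq:dis1} with $\psi_h=\mu_h^{n+1}$ cancels $\la\mu_h^{n+1},\dtau\phi_h\ra$ against the surviving flux pairing with $\tfrac{1}{\vtheta_h^{n+1}}\nabla\mu_h^{n+1}$, exactly mirroring the triple cancellation $(\psi,\xi,\omega)=(\mu,\dt\phi,\vtheta)$ in \cref{th:varspp}. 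What is left is precisely $\mathcal{D}_{num}^{n+1}$, and its sign follows termwise from the overall factor $-\tfrac{\tau}{2}$: the gradient term is nonnegative since $\gamma>0$ and $\vtheta_h^{n+1}>0$; concavity of $f(\phi,\cdot)$ (\cref{as:pot}) gives $\partial_{\vtheta\vtheta}f\le0$; and the convex–concave splitting (\cref{as:pot}) gives $\partial_{\phi\phi}f_{\text{vex}}\ge0\ge\partial_{\phi\phi}f_{\text{cav}}$, so the bracket is nonnegative and $\mathcal{D}_{num}^{n+1}\le0$. I expect the main obstacle to be precisely this bookkeeping in the last step: ensuring that the discrete product-rule residual $s_h^n(\vtheta_h^{n+1}-\vtheta_h^n)$ and the mixed $\vtheta$-$\phi$ couplings in $F$ line up term by term with the flux expressions in \eqref{eq:dis1}–\eqref{eq:dis3}, so that the cancellation is exact and only the stated quadratic remainders, with their specific evaluation points $\zeta_1,\zeta_2,\zeta_3$, survive.
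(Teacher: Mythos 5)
Your proposal is correct and takes essentially the same approach as the paper's proof: mass conservation and entropy production via the test functions $\psi_h=1$ and $\omega_h=1$, and the energy balance via the identity $e=f-\vtheta\,\dtheta f$ (your $e=F+\vtheta s$ with cancelling gradient parts), the discrete product rule, the test-function triple $(\psi_h,\xi_h,\omega_h)=(\mu_h^{n+1},\dtau\phi_h,\vtheta_h^{n+1})$ — including the key observation that the first flux group of \eqref{eq:dis3} vanishes for $\omega_h=\vtheta_h^{n+1}$ — and the same Taylor/convex--concave expansion producing $\mathcal{D}_{num}^{n+1}$ with the stated signs.
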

\begin{proof}
    To derive the conservation of mass and the entropy production, one can proceed exactly as in the continuous case by testing \eqref{eq:dis1} with $\psi_h=1$ and \eqref{eq:dis3} with $\omega_h=1$, respectively. To get the energy conservation we first use the identity $e=f-\vtheta\cdot\dtheta f$ and calculate as follows
    \begin{align*}
        \la\dtau e_h,1\ra&=\la\dtau f_h-\dtau(\vtheta_h\cdot\dtheta f_h),1\ra\\
        &=\tfrac{1}{\tau}\left(\la f_h^{n+1}-f_h^n,1\ra-\la \vtheta_h^{n+1}\cdot\dtheta f_h^{n+1}-\vtheta_h^n\cdot\dtheta f_h^n,1\ra\right)\\
        &=\tfrac{1}{\tau}\left(\la f_h^{n+1}-f_h^n,1\ra-\la\dtheta f_h^{n+1}-\dtheta f_h^{n}, \vtheta_h^{n+1}\ra\right.\\
        &\quad\left.-\la\dtheta f_h^{n},\vtheta_h^{n+1}-\vtheta_h^n\ra\right)\\
        &=\tfrac{1}{\tau}\left(\la f_h^{n+1}-f_h^n,1\ra+\la s_h^{n+1}-s_h^{n}, \vtheta_h^{n+1}\ra\right.\\
        &\quad\left.+\tfrac{\gamma}{2}\la\snorm{\nabla\phi_h^{n+1}}^2-\snorm{\nabla\phi_h^{n}}^2,\vtheta_h^{n+1}\ra-\la\dtheta f_h^{n},\vtheta_h^{n+1}-\vtheta_h^n\ra\right)\\
        &=\tfrac{1}{\tau}\la f_h^{n+1}-f_h^n,1\ra-\la\dtheta f_h^{n},\dtau \vtheta_h\ra+\la \dtau s_h, \vtheta_h^{n+1}\ra\\
        &\quad+\gamma\la\nabla\phi_h^{n+1},\vtheta_h^{n+1}\nabla\dtau\phi_h\ra-\tfrac{\gamma\tau}{2}\la\snorm{\nabla\dtau\phi_h}^2,\vtheta_h^{n+1}\ra.\\
        \intertext{Now we use \eqref{eq:dis3} with $\omega_h=\vtheta_h^{n+1}$, where the first term cancels out to get}
        \la\dtau e_h,1\ra&=\tfrac{1}{\tau}\la f_h^{n+1}-f_h^n,1\ra-\la\dtheta f_h^{n},\dtau \vtheta_h\ra+\gamma\la\nabla\phi_h^*,\dtau\phi_h\nabla \vtheta_h^{n+1}\ra\\
        &\quad+\la\tfrac{1}{\vtheta_h^*}\C_h^*\nabla\vtheta_h^{n+1}+\M_h^*\nabla\mu_h^{n+1}-\tfrac{\mu_h^*}{\vtheta_h^*}\M_h^*\nabla\vtheta_h^{n+1},\tfrac{1}{\vtheta_h^{n+1}}\nabla\mu_h^{n+1}\ra\\
        &\quad+\gamma\la\nabla\phi_h^{n+1},\vtheta_h^{n+1}\nabla\dtau\phi_h\ra-\tfrac{\gamma\tau}{2}\la\snorm{\nabla\dtau\phi_h}^2,\vtheta_h^{n+1}\ra.\\
        \intertext{Insertion of $\xi_h=\dtau\phi_h$ into \eqref{eq:dis2} shows}
        \la\dtau e_h,1\ra&=\tfrac{1}{\tau}\la f_h^{n+1}-f_h^n,1\ra-\la\dtheta f_h^{n},\dtau \vtheta_h\ra+\la\mu_h^{n+1},\dtau\phi_h\ra\\
        &\quad+\la\tfrac{1}{\vtheta_h^*}\C_h^*\nabla\vtheta_h^{n+1}+\M_h^*\nabla\mu_h^{n+1}-\tfrac{\mu_h^*}{\vtheta_h^*}\M_h^*\nabla\vtheta_h^{n+1},\tfrac{1}{\vtheta_h^{n+1}}\nabla\mu_h^{n+1}\ra\\
        &\quad-\la f_{\phi}(\phi_h^{n+1},\phi_h^n,\vtheta_h^{n+1}),\dtau\phi_h\ra-\tfrac{\gamma\tau}{2}\la\snorm{\nabla\dtau\phi_h}^2,\vtheta_h^{n+1}\ra.\\
        \intertext{Finally using \eqref{eq:dis1} with $\psi_h=\mu_h^{n+1}$ yields}
        \la\dtau e_h,1\ra&=\tfrac{1}{\tau}\la f_h^{n+1}-f_h^n,1\ra-\la\dtheta f_h^{n},\dtau \vtheta_h\ra\\
        &\quad-\la f_{\phi}(\phi_h^{n+1},\phi_h^n,\vtheta_h^{n+1}),\dtau\phi_h\ra-\tfrac{\gamma\tau}{2}\la\snorm{\nabla\dtau\phi_h}^2,\vtheta_h^{n+1}\ra.\\
        \intertext{Adding $\pm f(\phi_h^{n},\vtheta_h^{n+1})$ and using \ref{as:pot} allows us the rewrite as follows}
        \la\dtau e_h,1\ra&=\tfrac{1}{\tau}\la f_{\text{vex}}(\phi_h^{n+1},\vtheta_h^{n+1})-f_{\text{vex}}(\phi_h^{n},\vtheta_h^{n+1}),1\ra\\
        &\quad-\la\dphi f_{\text{vex}}(\phi_h^{n+1},\vtheta_h^{n+1}),\dtau\phi_h\ra\\
        &\quad+\tfrac{1}{\tau}\la f_{\text{cav}}(\phi_h^{n+1},\vtheta_h^{n+1})-f_{\text{cav}}(\phi_h^{n},\vtheta_h^{n+1}),1\ra\\
        &\quad-\la\dphi f_{\text{cav}}(\phi_h^{n},\vtheta_h^{n+1}),\dtau\phi_h\ra\\
        &\quad+\tfrac{1}{\tau}\la f(\phi_h^{n},\vtheta_h^{n+1})-f(\phi_h^{n},\vtheta_h^{n}),1\ra-\la\dtheta f(\phi_h^n,\vtheta_h^n),\dtau \vtheta_h\ra\\
        &\quad-\tfrac{\gamma\tau}{2}\la\snorm{\nabla\dtau\phi_h}^2,\vtheta_h^{n+1}\ra.\\
        \intertext{Taylor-expansion of $f$ in $\phi$ and $\vtheta$ finally reveals}
        \la\dtau e_h,1\ra&=-\tfrac{\tau}{2}\Big(\la\partial_{\phi\phi}f_{\text{vex}}(\zeta_2,\vtheta_h^{n+1})-\partial_{\phi\phi}f_{\text{cav}}(\zeta_3,\vtheta_h^{n+1}),(d_\tau^{n+1}\phi_h)^2\ra\\
        &\quad-\la\partial_{\vtheta\vtheta}f(\phi_h^{n},\zeta_1),(d_\tau^{n+1} \vtheta_h)^2\ra+\gamma\la\snorm{\nabla d_\tau^{n+1}\phi_h}^2,\vtheta_h^{n+1}\ra\Big)=\mathcal{D}_{num}^{n+1}.
    \end{align*}
    The negativity of $\mathcal{D}_{num}^{k}$ follows from the properties of $f$ in \ref{as:pot} and the positivity of $\vtheta_h$.
\end{proof}

\subsection{Existence of positive solutions}

In the next step, we will establish existence of discrete solutions with uniformly positive temperature. To this end we need to restrict the assumptions on the potential $f$.
\begin{assump}
    \item We extend \ref{as:pot} and assume that $f$ is of the form
    \begin{equation*}
        f(\phi,\vtheta) = -b\vtheta\log(\vtheta) + \vtheta f_1(\phi) + f_2(\phi)
    \end{equation*}
    with $f_2(\phi)\geq -c$ for some constants $b,c>0$ independent of $\phi$ and $\vtheta$. Furthermore, we assume that there exists another constant $c_f>0$ independent of $\phi$ such that
    \begin{equation*}
        f_2(\phi) + c_ff_1(\phi) \geq -c.
    \end{equation*}
    By construction $F(\phi,\cdot):\RR_+\to\RR$ is strictly concave.\label{as:exergy}
    \item We assume that the convex-concave decomposition of $f$ fulfills
    \begin{equation*}
        |f_\phi(\phi_h^{n+1},\phi_h^n,\vtheta_h^{n+1})| \leq C_f(|\vtheta_h^{n+1}|+1)(\snorm{\phi_h^{n+1}}^6 + \snorm{\phi_h^n}^6 + 1)
    \end{equation*}
    for some constant $C_f>0$.
    \label{as:fphi}
    \item The mesh $\Th$ is quasi-uniform.\label{as:mesh}
\end{assump}
Having quasi-uniform meshes $\Th$ we can apply the inverse inequalities, see \cite[Theorem 4.5.11]{BrennerScott},
\begin{align} \label{eq:inverse}
    \|v_h\|_{H^1} &\le C_\text{semi} h^{-1} \|v_h\|_{L^2}, \\
    \|v_h\|_{L^p} &\le C_\text{inv} h^{d/p-d/q} \|v_h\|_{L^q}, 
\end{align}
which hold for all discrete functions $v_h \in \Vh$ and $1 \le q \le p \le \infty$.

\begin{remark}
\Cref{as:exergy} is satisfied for
\begin{align*}
    f&=a\big(2\phi^4-4\phi^3+(\tfrac{\vtheta-\vtheta_c}{d}+3)\phi^2
        -(\tfrac{\vtheta-\vtheta_c}{d}+1)\phi+0.25(\tfrac{\vtheta-\vtheta_c}{d}+1)\big)\notag\\
        &\quad-b(\vtheta \log(\tfrac{\vtheta}{\vtheta_c})+\vtheta-\vtheta_c)
\end{align*}
for $c_f=\vtheta_c,b=b,c=\min\{\vtheta_c\log(\vtheta_c),0\}$ given that $a,b,d,\vtheta_c >0$, see also \cite{alt1992existence} for a similar assumption. 
\end{remark}

\begin{theorem}\label{th:existence}
    Let \ref{as:exergy}--\ref{as:mesh} hold. For given $h,\tau>0$ there exists at least one discrete solution $(\phi_h,\mu_{h},\vtheta_h)$ of \cref{prob:fulldisc} such that $\vtheta_h^k>\underaccent{\bar}{\vtheta}>0$ for all $0\leq k\leq n_T.$ 
\end{theorem}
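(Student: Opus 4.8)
The plan is to apply the topological degree framework of \cref{th:topo}. First I would fix the time level $n$ and regard \cref{prob:fulldisc} as a finite-dimensional root-finding problem for the unknown $\mathbf{v}=(\boldsymbol\phi,\boldsymbol\mu,\boldsymbol\vtheta)$ collecting the nodal degrees of freedom of $(\phi_h^{n+1},\mu_h^{n+1},\vtheta_h^{n+1})\in\Vh\times\Vh\times\Vh$ in the Lagrange basis. The unconstrained block $\mathbf{x}$ consists of the coefficients of $\phi_h^{n+1}$ and $\mu_h^{n+1}$, while the constrained block $\mathbf{y}$ consists of those of $\vtheta_h^{n+1}$; since $\Vh$ consists of continuous piecewise linear functions, positivity of these nodal values is equivalent to $\vtheta_h^{n+1}>0$ on $\bar\Omega$. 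Writing the residuals of \eqref{eq:dis1}--\eqref{eq:dis3} tested against the basis functions as a map $\mathbf{g}$ on the open set $V=\{\mathbf{y}>0\}$, I would then introduce a homotopy $\mathbf{G}(\cdot,\alpha)$ that scales the nonlinear coupling and the previous-time-level data by $\alpha$ while retaining the coercive contributions (the mass term in \eqref{eq:dis1}, the term $\gamma\la\nabla\phi,\vtheta\nabla\xi\ra$ in \eqref{eq:dis2}, and the diagonal dissipation in \eqref{eq:dis3}), arranged so that the algebraic cancellations behind the mass, energy and entropy identities of \cref{th:disspp} survive for every $\alpha\in[0,1]$.

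The heart of the argument is the uniform-in-$\alpha$ a priori bound required by hypothesis (ii). Exactly as in the proof of \cref{th:disspp}, testing the first equation with $\psi_h=1$ yields mass conservation, so the mean of $\phi_h^{n+1}$ is fixed by the data; testing the third with $\omega_h=1$ gives $\la\dtau s_h,1\ra=\mathcal{D}_h^{n+1}\ge0$, so the total entropy does not decrease; and the combination $\psi_h=\mu_h^{n+1},\ \xi_h=\dtau\phi_h,\ \omega_h=\vtheta_h^{n+1}$ gives $\la\dtau e_h,1\ra=\mathcal{D}_{num}^{n+1}\le0$, so the total internal energy does not increase. Consequently $\la e_h^{n+1}-c_f s_h^{n+1},1\ra\le\la e_h^{n}-c_f s_h^{n},1\ra=:C_1$, and \ref{as:helmholtz} converts this into $\norm{\nabla\phi_h^{n+1}}_{L^2}^2+\norm{\vtheta_h^{n+1}}_{L^1}\le C_*C_1$. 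Together with mass conservation and Poincaré this bounds $\phi_h^{n+1}$ in $H^1$, and on the fixed quasi-uniform mesh (see \ref{as:mesh}) the inverse estimate \eqref{eq:inverse} promotes the $L^1$ control of $\vtheta_h^{n+1}$ to the upper bound $\vtheta_h^{n+1}<C_2$. Finally, testing \eqref{eq:dis2} with $\xi_h=\mu_h^{n+1}$ and using the growth bound \ref{as:fphi} together with \eqref{eq:inverse} to absorb the terms $\gamma\la\nabla\phi_h^{n+1},\vtheta_h^{n+1}\nabla\mu_h^{n+1}\ra$ and $\la f_\phi,\mu_h^{n+1}\ra$ bounds $\norm{\mu_h^{n+1}}_{L^2}$, hence $\norm{\mathbf{x}}$.

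The main obstacle is the strict positivity, i.e. the lower bound $\vtheta_h^{n+1}>\underaccent{\bar}{\vtheta}$ that places $\mathbf{y}$ into the slab of hypothesis (ii). Here I would exploit the singular behaviour of the potential encoded in \ref{as:pot} and \ref{as:helmholtz}: since $f(\phi,\cdot)$ is strictly concave and $f(\phi,\vtheta)\to\infty$ as $\vtheta\to0^+$, the entropy density $s=-\dtheta f$ diverges to $-\infty$ as $\vtheta\to0^+$. If the minimal nodal value of $\vtheta_h^{n+1}$ were to approach zero, the corresponding nodal contribution to $\la s_h^{n+1},1\ra$ would tend to $-\infty$, while the established upper bound on $\la e_h^{n+1},1\ra$ (equivalently the $L^1$ bound on $\vtheta_h^{n+1}$) caps the positive entropy that the remaining nodes can supply; this contradicts the entropy lower bound $\la s_h^{n+1},1\ra\ge\la s_h^{n},1\ra$ and forces $\vtheta_h^{n+1}\ge\underaccent{\bar}{\vtheta}$ for a threshold depending only on the data and on $h,\tau$. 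Making this quantitative on the nodal basis, and simultaneously guaranteeing that $\mathbf{G}(\cdot,\alpha)$ preserves the energy and entropy identities used above, is the delicate point.

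With these bounds in hand the set $W=\{\norm{\mathbf{x}}<C_1,\ \underaccent{\bar}{\vtheta}<\mathbf{y}<C_2\}$ satisfies hypothesis (ii) for every $\alpha\in[0,1]$. It then remains to verify hypothesis (iii): I would choose the decoupled, essentially linear reference map obtained at $\alpha=0$ — a mass projection for $\phi_h^{n+1}$, an elliptic solve for $\mu_h^{n+1}$, and a strictly monotone temperature solve through the invertible relation $\vtheta\mapsto-\dtheta f$ (invertible by the strict concavity in \ref{as:helmholtz}) — which has a unique solution in the interior of $W$, whence its degree with respect to $\mathbf{b}$ and $W$ equals $\pm1\neq0$. \cref{th:topo} then transfers this to $\mathbf{G}(\cdot,1)=\mathbf{g}$, producing at least one discrete solution of \cref{prob:fulldisc} with $\vtheta_h^{n+1}>\underaccent{\bar}{\vtheta}>0$; induction over $0\le n\le n_T-1$ completes the proof.
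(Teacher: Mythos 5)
Your overall architecture coincides with the paper's: recast one time step as a finite-dimensional root-finding problem in the nodal values, apply \cref{th:topo} with the sets $V,W$ of \eqref{eq:defV}--\eqref{eq:defW}, obtain condition (ii) from the discrete mass/energy/entropy identities of \cref{th:disspp} together with \ref{as:helmholtz}, Poincar\'e and the inverse inequalities (your $\mu_h$-estimate with $\xi_h=\mu_h^{n+1}$ is literally the paper's computation), and conclude by induction over time steps. The gaps are in the two steps you yourself flag as delicate. First, the homotopy: you never construct it, and the construction you sketch --- scaling the \emph{previous-time-level data} and the nonlinear coupling by $\alpha$ while \emph{retaining} the dissipation --- would destroy exactly the cancellations you need, because the identities $\la e(\phi_h,\vtheta_h)-e(\phi_h^n,\vtheta_h^n),1\ra=\tau\mathcal{D}_{num}^{n+1}\le0$ and $\la s(\phi_h,\vtheta_h)-s(\phi_h^n,\vtheta_h^n),1\ra\ge0$ are telescoping/Taylor expansions around the \emph{unscaled} previous state. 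The paper does the opposite: it keeps every time-difference term and the full $\mu$-equation intact and multiplies only the Onsager flux/dissipation terms in \eqref{eq:exdis1} and \eqref{eq:exdis3} by $\alpha$; then the energy identity survives verbatim and the entropy identity merely acquires the factor $\alpha$ on the nonnegative production, which is what makes condition (ii) hold for all $\alpha\in[0,1]$. Your sketch is also internally inconsistent: with the previous data scaled away and the dissipation retained, the $\alpha=0$ problem is \emph{not} the decoupled reference problem (mass projection onto $\phi_h^n$, elliptic solve for $\mu_h$, monotone solve of $s(\phi_h,\vtheta_h)=s(\phi_h^n,\vtheta_h^n)$) that you invoke for condition (iii); that reference problem, \eqref{eq:G01}--\eqref{eq:G03}, is precisely what the paper's homotopy produces at $\alpha=0$.

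Second, the lower temperature bound. Your blow-up argument needs $\int_\Omega s(\phi_h,\vtheta_h)\to-\infty$ as the smallest nodal value of $\vtheta_h$ tends to $0$. This fails whenever the singularity of $s$ at $\vtheta=0$ is integrable: for the logarithmic potential of the paper's own experiments one has $s\sim b\log\vtheta$, and since $\vtheta_h$ is piecewise linear, $\int_K\log\vtheta_h$ stays bounded below even when a nodal value vanishes. So no contradiction with $\la s_h^{n+1},1\ra\ge\la s_h^n,1\ra$ arises, and the argument does not give a threshold $\underaccent{\bar}{\vtheta}$ under the stated assumptions. (The paper treats this point differently, and admittedly tersely: any solution lying in $V$ has a strictly positive nodal minimum, and the $\alpha$-dependent lower/upper bounds are then made uniform by taking the minimum/maximum over $\alpha$.) Finally, in condition (iii), uniqueness of the solution of the reference problem does not by itself yield degree $\pm1$ (consider $x\mapsto x^2$ at the origin); one needs nondegeneracy, which the paper supplies by computing $\det(\mathbf{J}(\mathbf{G}(\cdot,0)))=\det(\mathbf{A})^2\det(\mathbf{S}^\vtheta)>0$, the positivity of $\det(\mathbf{S}^\vtheta)$ coming from the strict concavity required in \ref{as:helmholtz}. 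Your appeal to monotonicity of $\vtheta\mapsto-\dtheta f$ points in the right direction but must be turned into exactly this kind of Jacobian (or injectivity) statement.
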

\begin{proof}
    To prove the existence result it suffices to consider a single time step $0\leq n\rightarrow n+1\leq n_\tau$. For simplicity we consider $g^*=g^n+1$ and $\mathbf{X}_h^*=\mathbf{X}_h^n+1$ for all $g\in\{\phi_h,\mu_h\vtheta_h\}$ and $\mathbf{X}\in\{\M,\K,\C\}$ in \cref{prob:fulldisc}, but the whole proof works for other choices of $g^*$ and $\mathbf{X}_h^*$. After choosing the Lagrange basis in $\Vh$ we can rewrite \cref{prob:fulldisc} as a finite-dimensional nonlinear problem $\mathbf{g}(\x)=\mathbf{0}$, i.e. $\mathbf{b}=0$, where $\x=(\boldsymbol{\phi},\boldsymbol{\mu},\boldsymbol{\vtheta})\in\mathbb{R}^{3N}$ is a vector containing the nodal values of $(\phi^{n+1}_h, \mu_h^{n+1},\vtheta_h^{n+1})$. For simplicity we will use the abbreviation $(\phi_h,\mu_h,\vtheta_h)\equiv(\phi_h^{n+1},\mu_h^{n+1},\vtheta_h^{n+1})$ as well as $\mathbf{X}_h\equiv\mathbf{X}_h^{n+1}$. To apply \cref{th:topo} we consider
    \begin{align}
        V &:= \{(\boldsymbol{\phi},\boldsymbol{\mu},\boldsymbol{\vtheta})\in\mathbb{R}^{3N} \text{ such that } \boldsymbol{\vtheta}>0\}, \label{eq:defV}\\
        W &:=\{(\boldsymbol{\phi},\boldsymbol{\mu},\boldsymbol{\vtheta})\in\mathbb{R}^{3N} \text{ such that } \norm{\boldsymbol{\phi}} + \norm{\boldsymbol{\mu}}< C_{\phi,\mu}, \varepsilon <\boldsymbol{\vtheta}< C_\vtheta,\}. \label{eq:defW}
    \end{align}
    The constants $C_{\phi,\mu},C_\vtheta,\varepsilon$ will be chosen later.
    Note that for piecewise linear finite element functions point values in a triangle are convex combinations of the corresponding nodal values, i.e. $\boldsymbol{\vtheta}>0$ is equivalent to $\vtheta_h>0$ everywhere in $\Omega.$ We will frequently use the notation $(\phi_h,\mu_h,\vtheta_h)\in V$ or $W$, meaning that the vector $\x=(\boldsymbol{\phi},\boldsymbol{\mu},\boldsymbol{\vtheta})$ of the nodal values is in $V$ or $W$ respectively. In the following $(\phi_h^n,\mu_h^n,\vtheta_h^n)$ is assumed to be given and bounded. Moreover there exist positive constants $\underaccent{\bar}{\vtheta}^n, \bar\vtheta^{n}>0$ such that $\underaccent{\bar}{\vtheta}^n \leq \vtheta_h^n \leq \bar\vtheta^{n}$.

    \noindent\textbf{Extended problem and condition (i):}\\
    First, we define $\G(\x,\alpha):=\la T_\alpha x_h,y_h \ra=\mathbf{0}$ for $x_h=(\phi_h,\mu_h,\vtheta_h)\in V$ and all basis functions $y_h=(\psi_h,\xi_h,\omega_h)\in\Vh\times\Vh\times\Vh$ with $\la T_\alpha x_h,y_h \ra=0$ given by the variational problem
    \begin{align}
        &\frac{1}{\tau}\la\phi_h-\phi_h^n,\psi_h\ra-\alpha\la\tfrac{\M_h}{\vtheta_h^2}\mu_h\nabla\vtheta_h-\tfrac{\M_h}{\vtheta_h}\nabla\mu_h-\tfrac{\C_h}{\vtheta_h^2}\nabla\vtheta_h,\nabla\psi_h\ra = 0\label{eq:exdis1}\\
        &\la\mu_h,\xi_h\ra-\gamma\la\nabla\phi_h,\vtheta_h\nabla\xi_h\ra-\gamma\la\nabla\phi_h,\xi_h\nabla \vtheta_h\ra-\la  f_{\phi}(\phi_h,\phi_h^n,\vtheta_h),\xi_h\ra=0\label{eq:exdis2}\\
        &\frac{1}{\tau}\la s(\phi_h,\vtheta_h)-s(\phi_h^n,\vtheta_h^n),\omega_h\ra-\alpha\la\tfrac{\K_h}{\vtheta_h^2}\nabla \vtheta_h+\tfrac{\C_h}{\vtheta_h}\nabla\mu_h-2\tfrac{\C_h}{\vtheta_h^2}\mu_h\nabla\vtheta_h\notag\\
        &\quad-\M_h\tfrac{\mu_h}{\vtheta_h}\nabla\mu_h+\M_h\tfrac{\mu_h}{\vtheta_h}^2\nabla\vtheta_h,\tfrac{\omega_h}{\vtheta_h^2}\nabla\vtheta_h-\tfrac{1}{\vtheta_h}\nabla\omega_h\ra \notag\\
        &\quad-\alpha\la\tfrac{\C_h}{\vtheta_h^2}\nabla\vtheta_h+\tfrac{\M_h}{\vtheta_h}\nabla\mu_h-\tfrac{\M_h}{\vtheta_h^2}\mu_h\nabla\vtheta_h,\tfrac{\omega_h}{\vtheta_h}\nabla\mu_h\ra\notag\\
        &\quad+\frac{1}{\tau}\gamma\la\nabla\phi_h(\phi_h-\phi_h^n),\nabla\omega_h\ra =0\label{eq:exdis3}
    \end{align}
    By construction it holds $\G(\cdot,1)=\mathbf{g}(\cdot)=0$, hence condition $(i)$ of \cref{th:topo} is fulfilled.\\

    \noindent\textbf{Condition (ii): Topological degree of $\G(\cdot,0)$:}\\
    In this part we show that the problem $\G(\x,0):=\G_0(\x)=0$ has a solution $\x\in W$ and that the topological degree of $\G_0$ is non-zero. We start by formulating $\G_0(\x)=0$, i.e. \eqref{eq:exdis1}--\eqref{eq:exdis3} with $\alpha=0$, which reads
    \begin{align}
        \la\phi_h-\phi_h^n,\psi_h\ra&= 0 \label{eq:G01}\\
        \la\mu_h,\xi_h\ra-\gamma\la\nabla\phi_h,\vtheta_h\nabla\xi_h\ra-\gamma\la\nabla\phi_h,\xi_h\nabla \vtheta_h\ra-\la f_{\phi}(\phi_h,\phi_h^n,\vtheta_h),\xi_h\ra &=0\label{eq:G02}\\
        \la s(\phi_h,\vtheta_h) - s(\phi_h^{n},\vtheta_h^{n}),\omega_h\ra&=0. \label{eq:G03}
	\end{align}
    
    First, we will show that \eqref{eq:G01}--\eqref{eq:G03} has a solution $(\phi_h,\mu_h,\vtheta_h)\in W$ for bounded $(\phi_h^n,\mu_h^n,\vtheta_h^n)$ with lower temperature bound $\underaccent{\bar}{\vtheta}^n.$ 
    By \eqref{eq:G01} $\phi_h=\phi_h^n$ and using the Taylor expansion we obtain from \eqref{eq:G03}
    \begin{equation*}
     \la \partial_\vtheta s(\phi_h^n,\zeta)(\vtheta_h-\vtheta_h^n),\omega_h\ra=0, \qquad \zeta\in[\vtheta_h^n,\vtheta_h].   
    \end{equation*}
    Recalling that $\partial_\vtheta s=-\partial_{\vtheta\vtheta} F$ and strict concavity of $F$, i.e. \ref{as:exergy}, we get that
    $\vtheta_h=\vtheta_h^n$. Then $\mu_h$ can be computed from \eqref{eq:G02}. It remains to show that $(\phi_h,\mu_h,\vtheta_h)\in W$. In fact it only remains to show the bound on $\norm{\mu_h}$. Hence, we estimate as follows:
    \begin{align*}
        \norm{\mu_h}_{L^2}^2 &= \gamma\la\vtheta_h\nabla\phi_h,\nabla\mu_h\ra+\gamma\la\nabla\phi_h\nabla \vtheta_h,\mu_h\ra+\la f_{\phi}(\phi_h,\phi_h,\vtheta_h),\mu_h\ra \\
        & \leq \gamma c_2\norm{\nabla\phi_h}_{L^2}\norm{\nabla\mu_h}_{L^2} + \gamma\norm{\nabla\phi_h}_{L^2}\norm{\nabla \vtheta_h}_{L ^2}\norm{\mu_h}_{L^\infty} \\
        &\;+ \norm{f_{\phi}(\phi_h,\phi_h,\vtheta_h)}_{L^1}\norm{\mu_h}_{L^\infty} \\
        &= (a) + (b) + (c).
    \end{align*}
    Using the inverse inequality \eqref{eq:inverse} we find that there exists constants $C_3,C_4,C_5>0$ such that
    \begin{align*}
        (a) &\leq  C_3h^{-1}\norm{\nabla\phi_h}_{L^2}\norm{\mu_h}_{L^2} , \\
        (b) &\leq  C_4h^{-(d+1)}\norm{\nabla\phi_h}_{L^2}\norm{\vtheta_h}_{L ^1}\norm{\mu_h}_{L^2},\\
        (c) &\leq C_5h^{-d/2}\norm{f_{\phi}(\phi_h,\phi_h,\vtheta_h)}_{L^1}\norm{\mu_h}_{L^2}.
    \end{align*}
    For (c) use \cref{as:fphi} as well as the Hölder and Sobolev inequalities to estimate
    \begin{align*}
        \norm{f_{\phi}(\phi_h,\phi_h,\vtheta_h)}_{L^1} &\leq C_f(\bar\vtheta^{n}+1)(2\norm{\phi_h}_{H^1}^6 + \snorm{\Omega}) \norm{\mu_h}_{L^2}.
    \end{align*}
    Since $\phi_h$ and $\vtheta_h$ are bounded we find
    \begin{equation}\label{eq:mubound}
        \norm{\mu_h}_{L^2} \leq C_\mu.
    \end{equation}
    
    Next, we will show that the topological degree is non-zero. To this end we compute the Jacobian matrix of $\G_0$
    \begin{equation*}
     J_{\G_0}(\x) = \begin{pmatrix}
         \mathbf{A} & \mathbf{0} & \mathbf{0} \\
         \mathbf{H}^\phi & \mathbf{A} & \mathbf{H}^\theta\\
          \mathbf{S}^\phi & \mathbf{0} & \mathbf{S}^\vtheta
     \end{pmatrix}  
    \end{equation*}
    with the matrices
    \begin{gather*}
        \mathbf{A}_{ij} = \la \lambda_j,\lambda_i \ra \quad\mathbf{S}^\phi_{i,j} = \la \dphi s(\phi_h,\vtheta_h)\lambda_j,\lambda_i \ra, \quad \mathbf{S}^\vtheta_{i,j} = \la \dtheta s(\phi_h,\vtheta_h)\lambda_j,\lambda_i \ra,\\
        \mathbf{H}_{i,j}^\phi = \gamma\la \nabla\lambda_j,\vtheta_h\nabla\lambda_i \ra +\gamma\la \nabla\lambda_j,\lambda_i\nabla\vtheta_h \ra + \la \partial_{\phi_h} f_\phi(\phi_h,\phi_h^n,\vtheta_h)\lambda_j,\lambda_i \ra,\\
        \mathbf{H}_{i,j}^\theta = \gamma\la \nabla\phi_h,\lambda_j\nabla\lambda_i\ra + \gamma\la \nabla\phi_h,\lambda_i\nabla\lambda_j \ra+ \la \partial_{\vtheta_h} f_\phi(\phi_h,\phi_h^n,\vtheta_h)\lambda_j,\lambda_i \ra.
    \end{gather*}
    Here the functions $\lambda_i$ are the finite element basis functions. The topological degree involves the signum of the determinant of the Jacobian matrix. We can compute that $\det(J_{\G_0}(\x))=\det(\mathbf{A})^2\det(\mathbf{S}^\vtheta).$ The mass matrix $\mathbf{A}$ is positive definite, hence $\det(\mathbf{A})^2> 0$. Since $\dtheta s = -\partial_{\vtheta\vtheta} F$, we find using strict concavity assumptions of $F$, i.e \ref{as:exergy}, that $-\partial_{\vtheta\vtheta}F> 0$ and therefore $\mathrm{sgn}(\det(J_{\G_0}(\x)))=1$. The topological degree is then given by
    \begin{equation*}
        \mathrm{deg}(\G_0,W,\mathbf{0}):=  \sum_{\x\in \G_0^{-1}(\mathbf{0})}\mathrm{sgn}\left(\det\left(J_{\G_0}(\x)\right)\right) = \sum_{\x\in \G_0^{-1}(\mathbf{0})} 1.
    \end{equation*}
    Hence, the topological degree is non-zero, if at least one $\x\in \G_0^{-1}(0)$ exists. We have already shown that system \eqref{eq:G01}--\eqref{eq:G03} admits at least one solution. Consequently, the topological degree is non-zero.
    
    Moreover, in this case the solution satisfies the assumptions of the implicit function theorem and hence there exists $\delta>0$ such that for all $\alpha\in[0,\delta)$ we have that $\mathbf{x}=z(\alpha)$, for a continuously differentiable function $z$. Due to the continuity of $\vtheta$ with respect to $\alpha$, the minimum of $\vtheta_h$ remains strictly positive for all $\alpha\in[0,\tfrac{\delta}{2}]$. We denote the minimal value of $\vtheta_h$ for $\alpha\in[0,\tfrac{\delta}{2}]$ by $C_\delta>0$.
    
    \noindent\textbf{Continuity of the map and condition (iii)}\\
    In this part we show that for every $\alpha\in[0,1]$, if a solution $\mathbf{x}\in V$ of $\mathbf{G}(\mathbf{x},\alpha)=0$ exists, then $\mathbf{x}\in W.$ To provide the required bounds of $\norm{\mathbf{x}},~\mathbf{x}=(\boldsymbol{\phi},\boldsymbol{\mu},\boldsymbol{\vtheta})$, we follow the lines of the proof of \cref{th:disspp} and find that for every $\alpha\in[0,1]$ we have
    \begin{align}
        \la\phi_h-\phi_h^n,1\ra&=0\label{eq:phimean}\\
        \la e(\phi_h,\vtheta_h)-e(\phi_h^n,\vtheta_h^n),1\ra &=\tau\mathcal{D}_{num}\leq0, \label{eq:ebound}\\
        \la s(\phi_h,\vtheta_h)-s(\phi_h^n,\vtheta_h^n),1\ra &=\tau\alpha\mathcal{D}_h\geq0.\label{eq:sbound}
    \end{align}
    Combination of the latter two yields
    \begin{align}
      \la e(\phi_h,\vtheta_h) - c_fs(\phi_h,\vtheta_h), 1\ra &- \tau\mathcal{D}_{num}  + c_f\tau\alpha\mathcal{D}_h \notag\\
      &=  \la e(\phi_h^n,\vtheta_h^n) - c_fs(\phi_h^n,\vtheta_h^n), 1\ra \leq C_{e,s}\label{eq:esbound}
    \end{align}
    for some constant $C_{e,s}=C_{e,s}(\norm{\phi_h^n}_{H^1},\norm{\vtheta_h^n}_{L^1},\underaccent{\bar}{\vtheta}^n)>0$, depending on the norm bounds for $\phi_h^n$ and $\vtheta_h^n$ and the lower bound $\underaccent{\bar}{\vtheta}^n$ for $\vtheta_h^n$. Then using \cref{as:exergy} we find
    \begin{align}
        \la  e(\phi_h,\vtheta_h) - c_fs(\phi_h,\vtheta_h), 1\ra &= \la b(\vtheta_h-c_f-c_f\log(\vtheta_h))\notag\\
        &\quad + f_2(\phi_h)-c_ff_1(\phi_h)+c_f\tfrac{\gamma}{2}\snorm{\nabla\phi_h}^2,1\ra.\label{eq:exbound}
    \end{align}
    Because $b(\vtheta_h-c_f-c_f\log(\vtheta_h))$ is bounded from below by $bc_f\log(c_f)$ for any $\vtheta_h$ and $f_2(\phi_h)-c_ff_1(\phi_h)>-c$, we obtain
    \begin{align*}
        \norm{\nabla\phi_h}_{L^2}^2 & 
        \leq 2\frac{C_{e,s}+\snorm{\Omega}\left(bc_f\log(c_f)+c\right)}{c_f\gamma}.
    \end{align*}
    Applying \eqref{eq:phimean} and the Poincaré inequality for $\phi_h$ we have
    \begin{equation}
      \norm{\phi_h}_{H^1}^2 \leq C_{\phi}.  \label{eq:phibound}
    \end{equation}
    To derive an upper bound for $\vtheta_h$ we observe that the internal energy has the form $e=\vtheta_h + f_2(\phi_h)$ with $f_2(\phi_h)>-c$. Using the positivity of $\vtheta_h$ and \eqref{eq:ebound} we find that $\norm{\vtheta_h}_{L^1}\leq C_{\norm{\vtheta}}$. Considering again \eqref{eq:exbound} yields 
    \begin{equation}
        \int_\Omega -\log(\vtheta_h)\leq \tfrac{c+C_{e,s}}{bc_f}+1=: C_{\log\vtheta}.	\label{eq:logbound}
    \end{equation}
    These bounds together with \eqref{eq:phibound} are sufficient to compute the uniform bound for $\mu_h$ in the same spirit of (ii), i.e. $\norm{\mu_h}_{L^2} \leq C_\mu$, for $C_\mu>0$. Hence, it remains to show the uniform lower bound of $\vtheta_h$.
    
    Recalling \eqref{eq:esbound} and the uniform positive definiteness assumption on the Onsager matrix $\L(\phi_h,\nabla\phi_h,\vtheta_h)$, i.e. \ref{as:onsag}, we find
    \begin{align*}
        c_f\tau\alpha\lambda_0\left(\norm*{\nabla \frac{1}{\vtheta_h}}_{L^2}^2 + \norm*{\nabla\frac{\mu_h}{\vtheta_h}}_{L^2}^2\right) \leq c_f\tau\alpha\mathcal{D}_h\leq C_{e,s}.
    \end{align*}
	We consider two different situations. First, we consider an element $K$ on which $\nabla\vtheta_h$ is non-zero and estimate
    \begin{align*}
        \frac{C_{e,s}}{c_f\tau\alpha\lambda_0}	\geq \int_K \snorm*{\nabla\frac{1}{\vtheta_h}}^2 = |\nabla\vtheta_h|^2\int_K \frac{1}{\vtheta_h^4} \geq C_{\nabla\vtheta}\norm{\vtheta_h^{-1}}_{L^4(K)}^4.
    \end{align*}
    As the gradient is by assumption non-zero we can restrict the integration domain from $K$ to $\tilde K:=\{x\in K: \vtheta_h(x)\in[\vtheta_{\min},2\vtheta_{\min}]\}$, where $\vtheta_{\min}$ is the minimum on $K$ and $|\tilde K|>0$. By construction we obtain
    \begin{equation*}
      \vtheta_{\min} \geq   \left(\frac{C_{\nabla\vtheta}c_f\tau\alpha\lambda_0|\tilde K|}{8C_{e,s}}\right)^{1/4} \geq \left(\frac{C_{\nabla\vtheta}c_f\tau\delta\lambda_0|\tilde K|}{16C_{e,s}}\right)^{1/4} := C_\text{grad}.
    \end{equation*}
    Second, we consider the case when $\vtheta_h=\vtheta_\text{const}$ is constant over one element $K$, i.e. $\nabla\vtheta_h\vert_K=0$. Then either $\vtheta_h = \vtheta_\text{const}$ over $\Omega$ or there exists an element $\bar K$ on which $\nabla\vtheta_h\neq 0$ and $\vtheta_h(x_j)=\vtheta_\text{const}$ at least in one nodal point $x_j$. If $\vtheta_h$ is constant in $\Omega$, we obtain using \eqref{eq:logbound} that 
    \begin{equation*}
        \vtheta_h \geq \exp\left(-\frac{C_{\log\vtheta}}{|\Omega|}\right).
    \end{equation*}
    In the other case, i.e. $\nabla\vtheta_h\neq 0$ on some element, we use the first result. Hence, we have derived the uniform lower bound 
    \begin{equation*}
        \vtheta_h \geq \min\left(\exp\left(-\frac{C_{\log\vtheta}}{|\Omega|}\right),C_\text{grad} \right)=:C_\text{min}
    \end{equation*}
    for $\alpha\in[\tfrac{\delta}{2},1]$. To obtain a uniform bound for $\alpha\in[0,1]$ we take the minimum of $C_\text{min}$ and $C_{\delta}$. We choose the constants $C_{\phi,\mu}, C_\vtheta, \varepsilon$ in \eqref{eq:defW} in the following way
    \begin{gather*}
        C_{\phi,\mu} = 2C_{\norm{\cdot}}\max\{\sqrt{C_\phi},C_\mu\}, \; C_\vtheta = 2\max\{C_{\norm{\cdot}_\infty}C_{\norm{\vtheta}},\bar\vtheta^{n}\}, \; \varepsilon=\frac{1}{2}\min\{C_\text{min},C_\delta\}.   
    \end{gather*}
    Because $C_{\norm{\vtheta}}$ is the bound in the discrete $L^1$ norm, we multiply by the norm equivalence constant $C_{\norm{\cdot}_\infty}$. The same applies to the $L^2$ bound $C_{\phi,\mu}$ with the constant $C_{\norm{\cdot}}$.

    Finally, we can apply \cref{th:topo} for one time step of the problem $\mathbf{G}(\mathbf{x},\alpha)=0$ and find that at least one solution $\mathbf{x}\in W$ for $\mathbf{g}(\x)=0$ exists. The existence of at least one discrete solution $(\phi_h,\mu_h,\vtheta_h)$ can be done in an inductive way.
\end{proof}

\begin{remark}
    \begin{enumerate}
        \item We should note that \cref{th:existence} gives the existence of a bounded solution with strictly positive temperature. The bounds depend on $h,\tau$ due to the use of inverse inequalities. Note that the quasi-uniformity assumptions is only for simplicity and can be omitted, if local mesh sizes are considered. Furthermore, we expect the result to be extendable to a larger class of Helmholtz free energies $f$.
        \item Uniqueness of solution for small $\tau$ is a consequence of the implicit function theorem. However, for larger $\tau$ multiple solution may exist and can affect the convergence of the Newton method.
    \end{enumerate}
\end{remark}

\section{Numerical experiments}\label{sec:numerical}

In this section, we present numerical results for the NCH system \eqref{eq:sys1}, \eqref{eq:sys2}, \eqref{eq:sys3alternative}, using our numerical scheme \eqref{eq:dis1}-\eqref{eq:dis3}. We consider a convergence test as well as an example that illustrates the schemes behavior. We choose a domain $\Omega=[0,1]^2$ with periodic boundary conditions. We consider an example of a quenching process, where the solution starts in a mixed state and is then quenched below the critical temperature $\vtheta_c$ at the corners by setting a low initial temperature. At the rest of the domain $\Omega$, the temperature is above $\vtheta_c$. 
To model the temperature dependent separation of two phases, we introduce the driving potential
\begin{align}
    f&=a\big(2\phi^4-4\phi^3+(\tfrac{\vtheta-\vtheta_c}{d}+3)\phi^2
        -(\tfrac{\vtheta-\vtheta_c}{d}+1)\phi+0.25(\tfrac{\vtheta-\vtheta_c}{d}+1)\big)\notag\\
        &\quad-b(\vtheta \log(\tfrac{\vtheta}{\vtheta_c})+\vtheta-\vtheta_c)
\end{align}
with parameters $a=0.01,~b=1,~d=1$ and $\vtheta_c=3$. The graphs are depicted in \cref{fig:potential}. Mobility and heat conductivity are $\M=10^{-2}\cdot\I$ and $\mathbf{K} = 5\cdot10^{-3}\cdot\I$. The cross-coupling term $\C$ will be either $10^{-4}\cdot\I$ or $0\cdot\I$. The interface parameter is fixed to $\gamma=10^{-4}$. For simplicity we take $g^*$ in \eqref{eq:dis1}--\eqref{eq:dis3} are taken to be $g^n$. It means that they are evaluated explicitly and we set $\mu_h^0=0$. The resulting system is still nonlinear and solved using the Newton method with error tolerance $10^{-12}$. The resulting linear system is solved using a direct solver. The numerical method is implemented in NGSolve \cite{schoberl2014c++}.
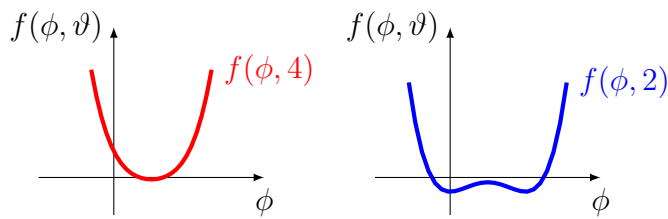
\begin{figure}[H]
    \centering
    \begin{tikzpicture}[scale=1,domain=-0.3:1.3]
        \draw[ -latex] (-1,0) -- (2,0) node[below] {$\phi$};
        \draw[ -latex] (0,-0.5) -- (0,2) node[left] {$f(\phi,\vtheta)$};
        \draw[red,ultra thick] plot ({\x},{2*(\x)^4-4*(\x)^3+4*(\x)^2-2*\x+0.5-4*ln(4/3)+1}) node[right]{$f(\phi,4)$};
    \end{tikzpicture}
    \begin{tikzpicture}[scale=1,domain=-0.55:1.55]
        \draw[ -latex] (-1,0) -- (2,0) node[below] {$\phi$};
        \draw[ -latex] (0,-0.5) -- (0,2) node[left] {$f(\phi,\vtheta)$};
        \draw[blue,ultra thick] plot ({\x},{2*(\x)^4-4*(\x)^3+2*(\x)^2-2*ln(2/3)-1}) node[right]{$f(\phi,2)$};
    \end{tikzpicture}
    \caption{The driving potential $f$ for different $\vtheta$}
    \label{fig:potential}
\end{figure}

\subsection{Convergence test}

We now present experimental results to verify the spatial and temporal convergence of the proposed scheme. We consider the following initial data
\begin{align*}
 \phi^0(x,y) &:= 0.6,\\
 \vtheta^0(x,y)&:= -5.9\cdot\operatorname{trans}_{[0,1]}\left(\frac{\sqrt{0.3x^2+0.3y^2}-0.2)}{\sqrt{0.001}}\right)+6.0,
\end{align*}
where
\begin{equation*}
    \operatorname{trans}_{[0,1]}(z)=\frac{-\tanh(z)+1}{2}\in[0,1].
\end{equation*}

As no exact solution is available, we will study self-convergence by computing the errors of the Cauchy sequence. We consider the following error norms: 
\begin{align*}
   \mathrm{err}(\nabla\phi,h_k)&:= \max_{n\in\Itau}\norm{\phi^n_{h_k}-\phi^n_{h_{k+1}}}_1 , \quad \mathrm{err}(\nabla\mu,h_k):=\sqrt{\tau\sum_{n=1}^{n_\tau}\norm{\mu^n_{h_k}-\mu^n_{h_{k+1}}}_1^2}, \\
   \mathrm{err}(\vtheta,h_k)&:=\max_{n\in\Itau}\norm{\vtheta^n_{h_k}-\vtheta^n_{h_{k+1}}}_0, \quad \mathrm{err}(\nabla\vtheta,h_k):=\sqrt{\tau\sum_{n=1}^{n_\tau}\norm{\vtheta^n_{h_k}-\vtheta^n_{h_{k+1}}}_1^2}.
\end{align*}
To compute the experimental order of convergence (eoc) in space we introduce $$\mathrm{eoc}_{k}:=\log_2\left(\frac{\mathrm{err}(a,h_{k-1})}{\mathrm{err}(a,h_{k})}\right)$$ for $a\in\{\nabla\phi,\nabla\mu,\vtheta,\nabla\vtheta\}.$

To investigate spatial convergence, we compute the error between successive spatial refinements on meshes with $h_k \approx 2^{-k}$ for $k=4,\ldots,8$ while keeping the time step fixed at $\tau=2^{-10}\cdot10^{-1}$. The final time is set to $T=10^{-1}$. The errors and approximate convergence rates, with $\C=10^{-4}\cdot\I$, can be found in \cref{tab:rates1}. They indicate optimal first-order convergence in the $H^1$-norm and the second-order convergence in the $L^2$-norm.

\begin{table}[htbp!]
    \centering
    \caption{Errors and rates for convergence in space.\label{tab:rates1}} 
    \resizebox{\columnwidth}{!}{
    \begin{tabular}{c|l|c|l|c|l|c|l|c}
        $ k $ & $\mathrm{err}(\nabla\phi,h_k)$ &  $\mathrm{eoc}_k$ & $\mathrm{err}(\nabla\mu,h_k)$ & $\mathrm{eoc}_k$ & $\mathrm{err}(\vtheta,h_k)$ &  $\mathrm{eoc}_k$ & $\mathrm{err}(\nabla\vtheta,h_k)$ &  $\mathrm{eoc}_k$   \\
        \hline
        4 & $3.69\cdot 10^{-2}$ & -- & $9.15\cdot 10^{-3}$ & -- & $7.50\cdot 10^{-2}$ & -- & $1.97\cdot 10^{0}$ & --\\
        5 & $2.08\cdot 10^{-2}$ & $0.82$ & $5.63\cdot 10^{-3}$ & $0.70$ & $1.72\cdot 10^{-2}$ & $2.12$ & $9.92\cdot 10^{-1}$ & $0.99$\\
        6 & $1.12\cdot 10^{-2}$ & $0.90$ & $2.69\cdot 10^{-3}$ & $1.06$ & $4.36\cdot 10^{-3}$ & $1.98$ & $5.04\cdot 10^{-1}$ & $0.98$\\
        7 & $5.66\cdot 10^{-3}$ & $0.98$ & $1.27\cdot 10^{-3}$ & $1.08$ & $1.12\cdot 10^{-3}$ & $1.95$ & $2.50\cdot 10^{-1}$ & $1.01$\\
    \end{tabular}}
\end{table}
To investigate time convergence, we fix the mesh size at $h=2^{-7}$ and consider time step sizes $\tau_k=2^{-k}\cdot10^{-1}$ for $k=7,...,11$. The final time remains $T=10^{-1}$. The spatial mesh is fixed with mesh size $h=2^{-7}$. The temporal errors are defined analogously:
\begin{align*}
   \text{err}(\nabla\phi,\tau_k)&:= \max_{n\in\Itauk}\norm{\phi^n_{h,\tau_k}-\phi^n_{h,\tau_{k+1}}}_1 , ~\text{err}(\nabla\mu,\tau_k):=\sqrt{\tau_k\sum_{n=1}^{n_{\tau_k}}\norm{\mu^n_{h,\tau_k}-\mu^n_{h,\tau_{k+1}}}_1^2}, \\
   \text{err}(\vtheta,\tau_k)&:=\max_{n\in\Itauk}\norm{\vtheta^n_{h,\tau_k}-\vtheta^n_{h,\tau_{k+1}}}_0, ~\text{err}(\nabla\vtheta,\tau_k):=\sqrt{\tau_k\sum_{n=1}^{n_{\tau_k}}\norm{\vtheta^n_{h,\tau_k}-\vtheta^n_{h,\tau_{k+1}}}_1^2}.
\end{align*}
To compute the experimental order of convergence (eoc) in time we introduce $$\mathrm{eoc}_k:=\log_2\left(\frac{\mathrm{err}(a,\tau_{k-1})}{\mathrm{err}(a,\tau_{k})}\right)$$ for $a\in\{\nabla\phi,\nabla\mu,\vtheta,\nabla\vtheta\}$. The errors and rates, for $\C=10^{-4}\cdot\I$, can be found in \cref{tab:rates2}. As expected, optimal first-order convergence in time is achieved.
\begin{table}[htbp!]
    \centering
    \small
    \caption{Errors and rates for convergence in time.\label{tab:rates2}} 
    \resizebox{\columnwidth}{!}{
    \begin{tabular}{c|l|c|l|c|l|c|l|c}
        $ k $ & $\text{err}(\nabla\phi,\tau_k)$ &  $\mathrm{eoc}_k$ & $\text{err}(\nabla\mu,\tau_k)$ & $\mathrm{eoc}_k$ & $\text{err}(\vtheta,\tau_k)$ &  $\mathrm{eoc}_k$ & $\text{err}(\nabla\vtheta,\tau_k)$ &  $\mathrm{eoc}_k$   \\
        \hline
        7 & $4.56\cdot 10^{-4}$ & $0.86$ & $8.93\cdot 10^{-5}$ & $0.97$ & $2.11\cdot 10^{-4}$ & $1.00$ & $3.53\cdot 10^{-3}$ & $1.01$\\
        8 & $2.43\cdot 10^{-4}$ & $0.91$ & $4.52\cdot 10^{-5}$ & $0.98$ & $1.05\cdot 10^{-4}$ & $1.00$ & $1.76\cdot 10^{-3}$ & $1.00$\\
        9 & $1.26\cdot 10^{-4}$ & $0.94$ & $2.27\cdot 10^{-5}$ & $0.99$ & $5.25\cdot 10^{-5}$ & $1.00$ & $8.81\cdot 10^{-4}$ & $1.00$\\
        10 & $6.44\cdot 10^{-5}$ & $0.97$ & $1.14\cdot 10^{-5}$ & $0.99$ & $2.63\cdot 10^{-5}$ & $1.00$ & $4.40\cdot 10^{-4}$ & $1.00$\\
    \end{tabular}}
\end{table}
Similar convergence results in space and time can be obtained without the cross-coupling term, i.e. $\C=0\cdot\I$. 

\subsection{Illustrating example}

In this subsection we illustrate the behavior of the NCH system. We keep $\theta^0$ as before and choose
\begin{align*}
    \phi^0(x,y) &:= 0.5+0.01\sin(211\pi x)\sin(211\pi y).
\end{align*}
All parameters are choosen as above. We work with the mobility matrix $\M=\I$. Furthermore, we will compare the results obtained for $\C=0\cdot\I$ and $\C=10^{-4}\cdot\I$, i.e. with and without cross-coupling.

The temporal evolution of $\phi_h$ for $\C=0\cdot\I$ and $\C=10^{-4}\cdot\I$ is depicted in \cref{fig:phi}. The results of $\vtheta_h$ can be found in \cref{fig:theta}. 
\begin{figure}
    \centering
    \begin{tabular}{c@{}c@{}c@{}c@{}}
        $t=0.1$ & $t=0.5$ & $t=2$ & $t=10$ \\[-0.5em]
         \includegraphics[trim={17cm 0cm 17cm 0cm},clip,scale=0.092]{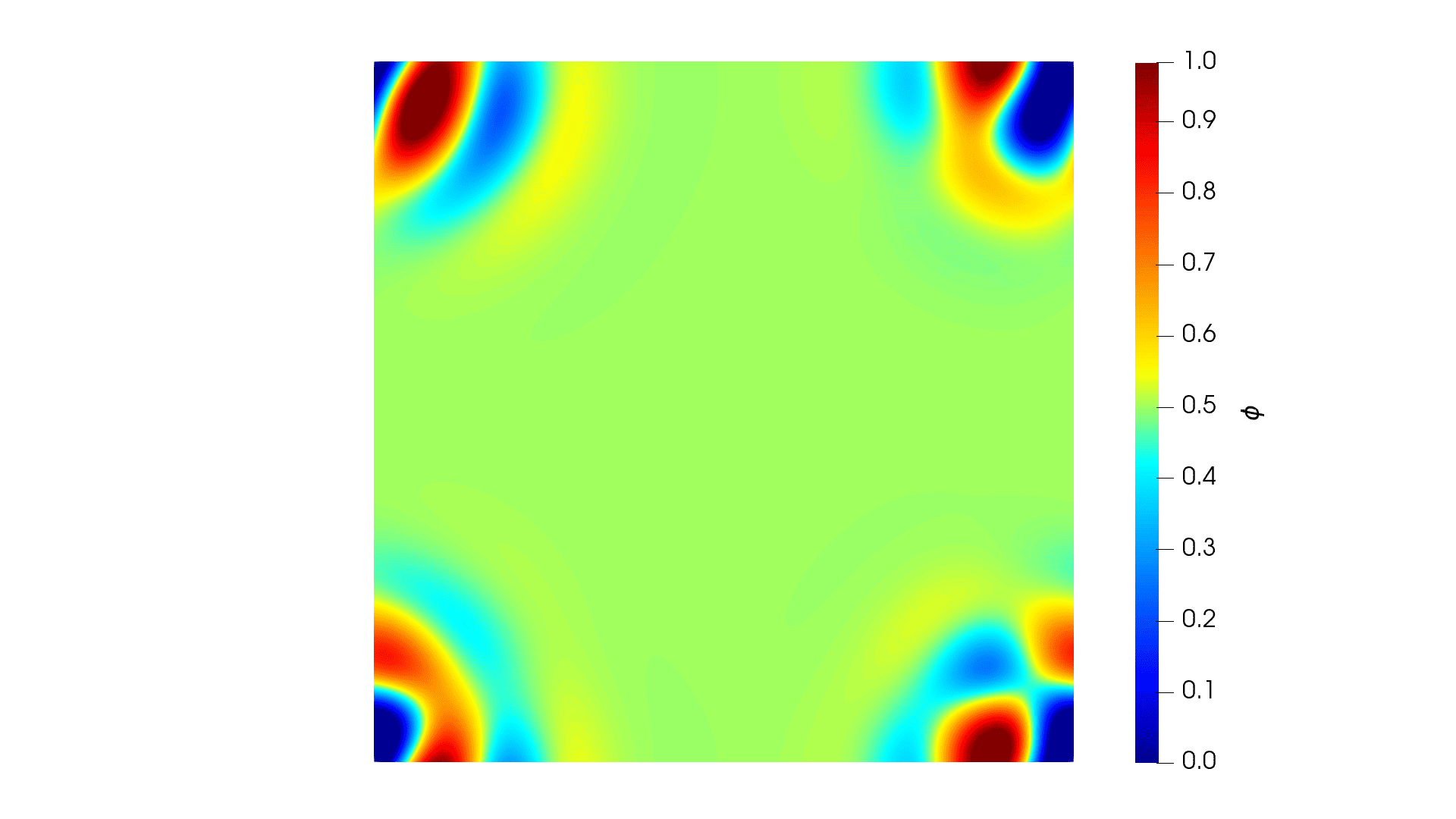} 
        &
        \includegraphics[trim={17cm 0cm 17cm 0cm},clip,scale=0.092]{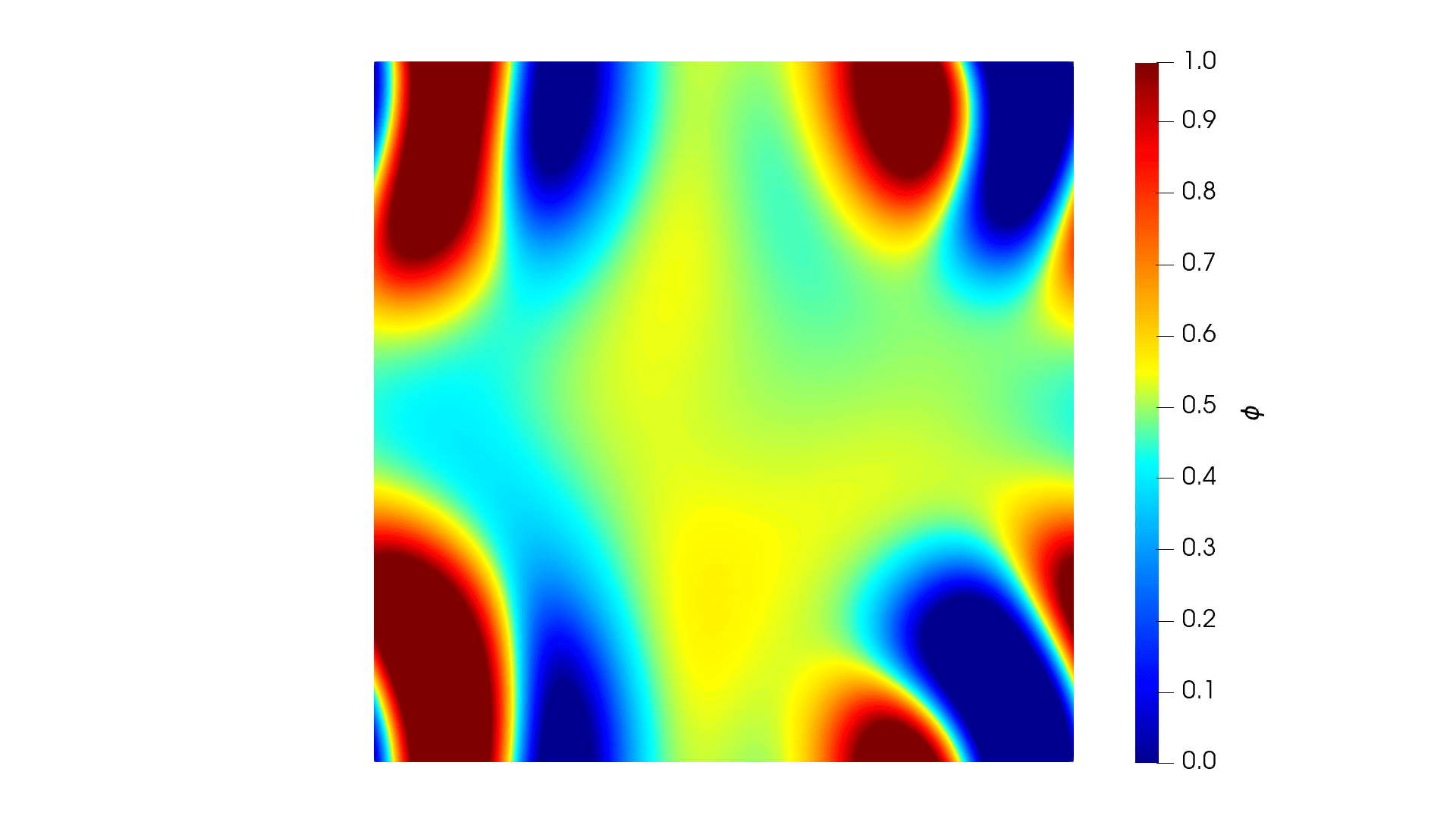} 
        &
        \includegraphics[trim={17cm 0cm 17cm 0cm},clip,scale=0.092]{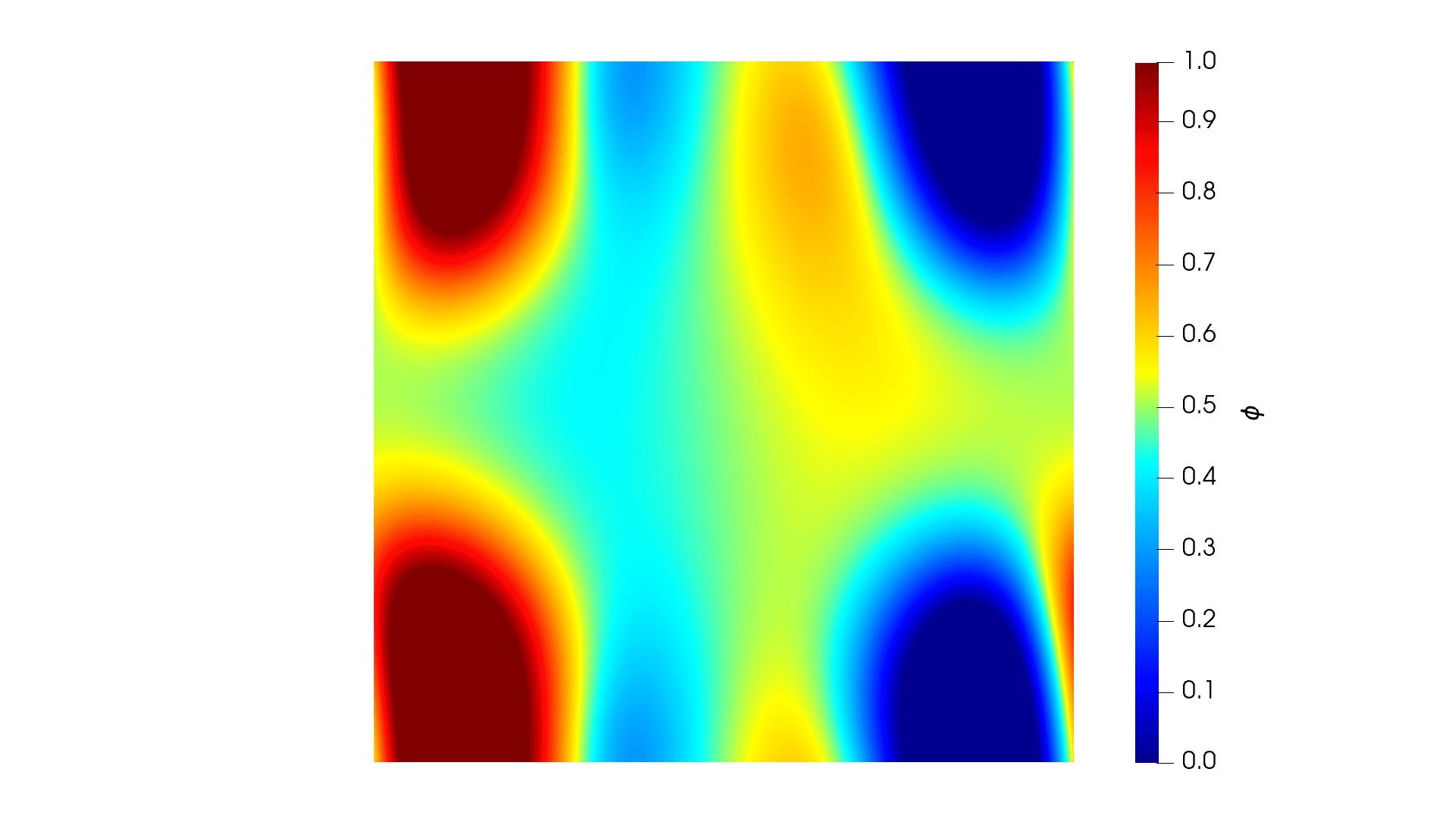}
        &
        \includegraphics[trim={17cm 0cm 17cm 0cm},clip,scale=0.092]{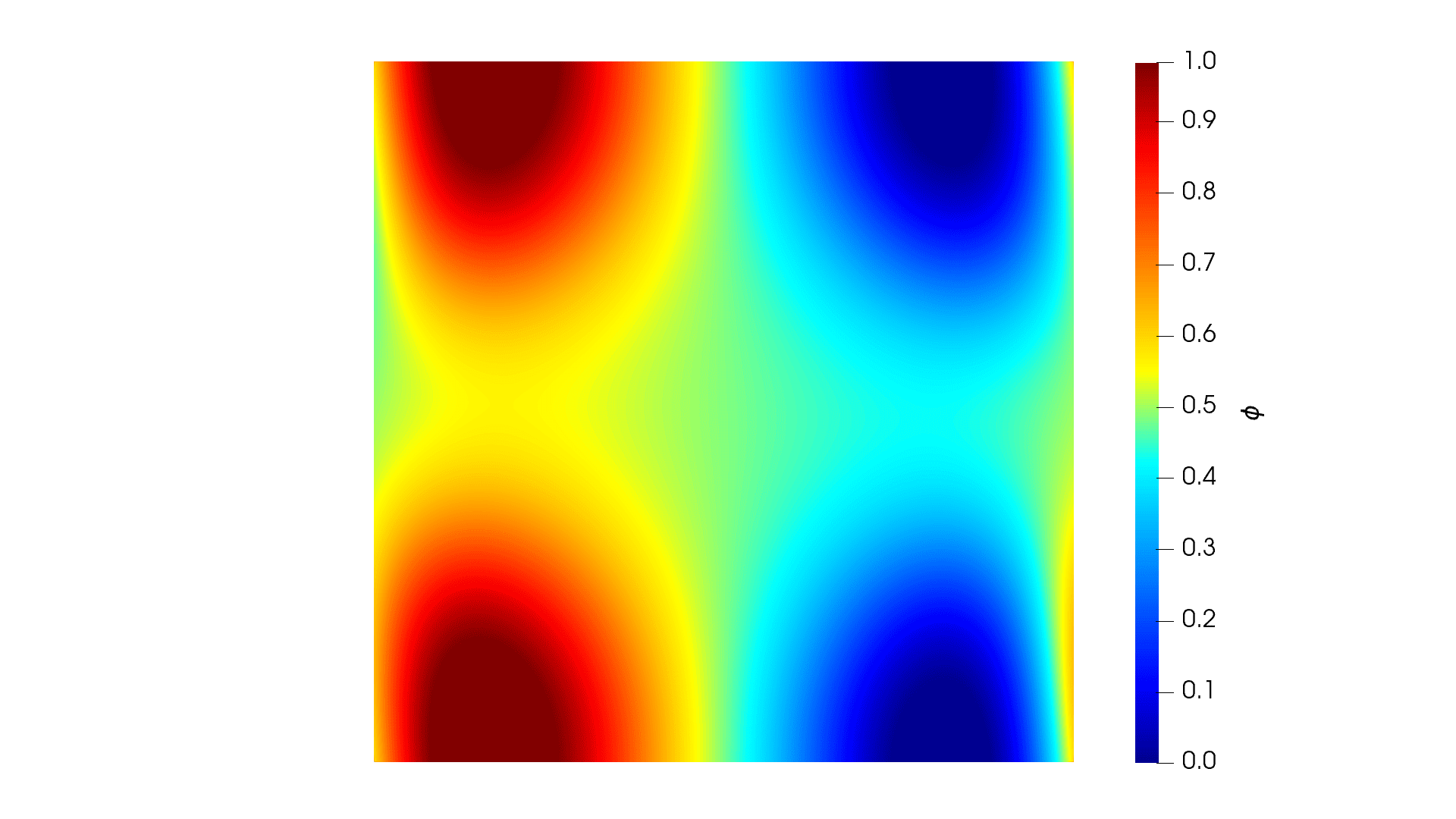} \\[-1em]
         \includegraphics[trim={17cm 0cm 17cm 0cm},clip,scale=0.092]{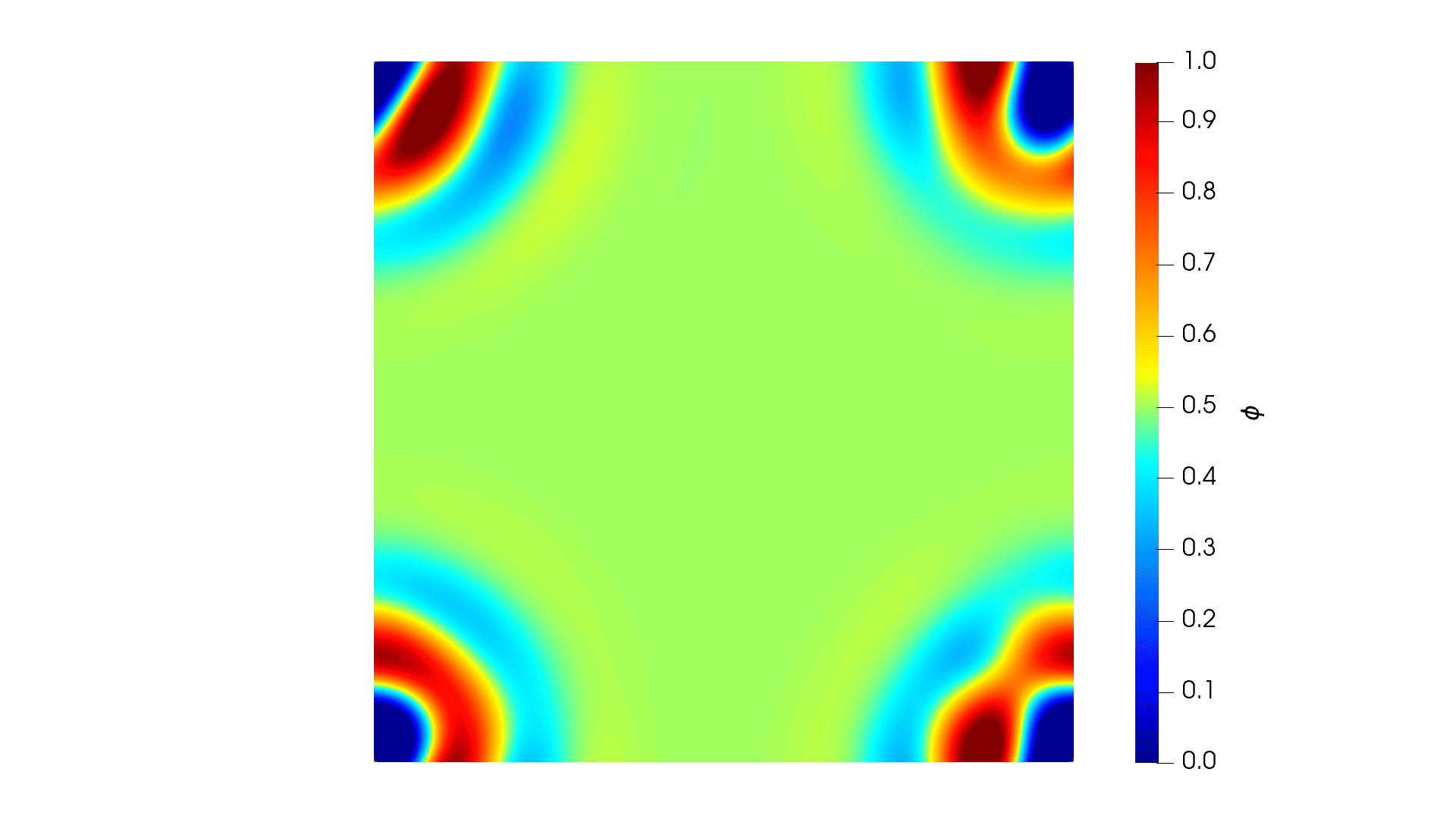} 
        &
        \includegraphics[trim={17cm 0cm 17cm 0cm},clip,scale=0.092]{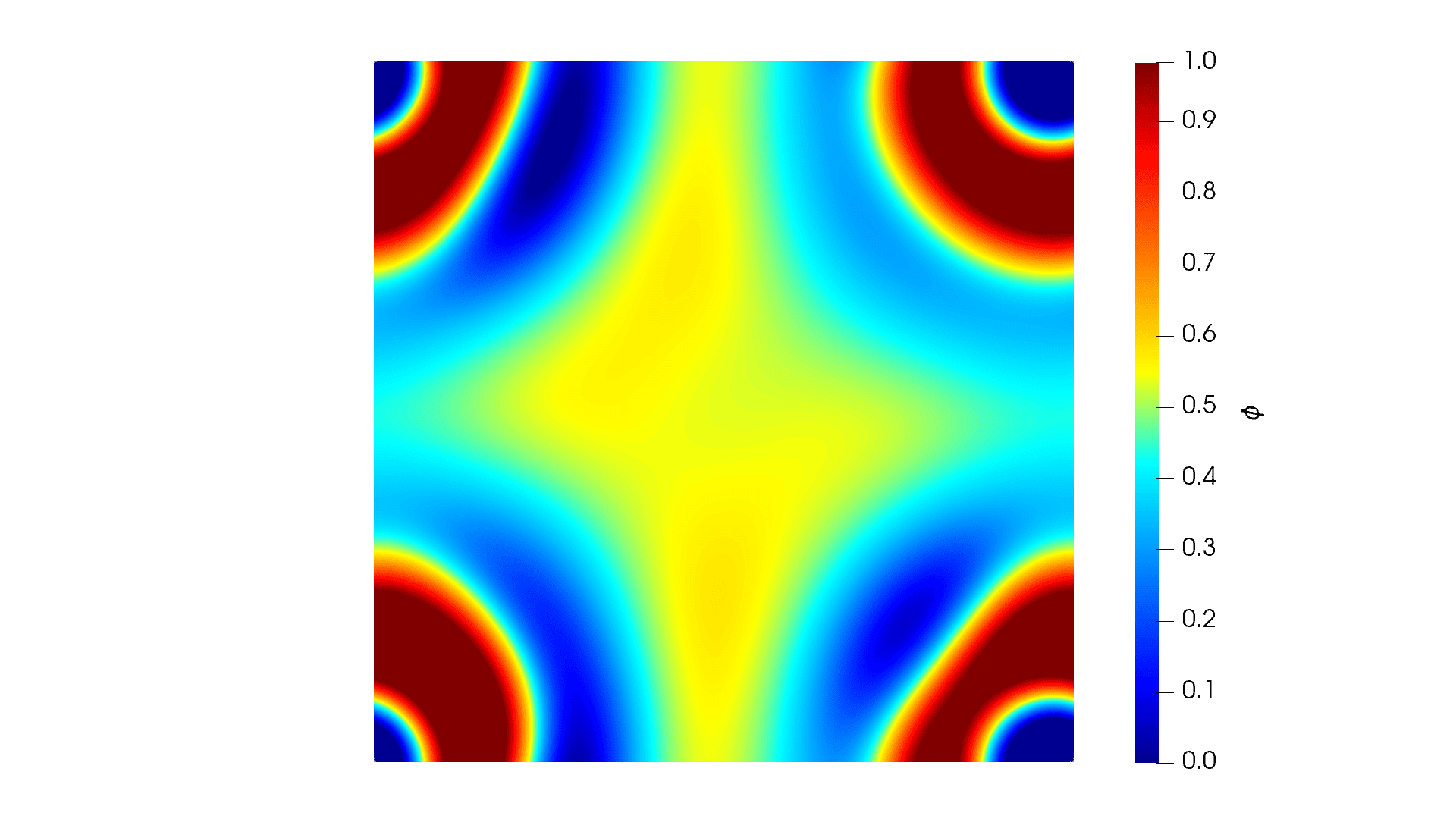}
        &
        \includegraphics[trim={17cm 0cm 17cm 0cm},clip,scale=0.092]{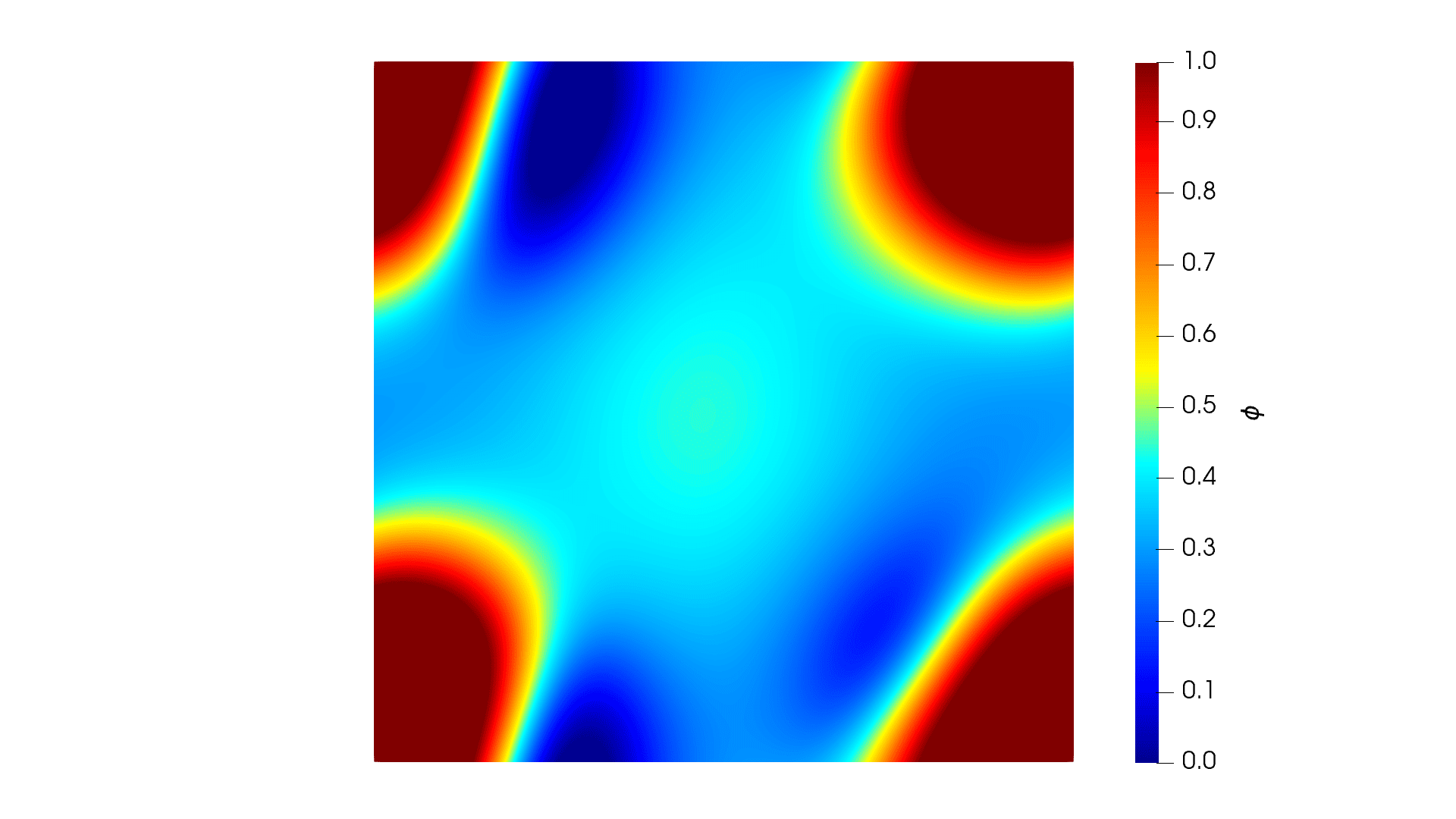}
        &
        \includegraphics[trim={17cm 0cm 17cm 0cm},clip,scale=0.092]{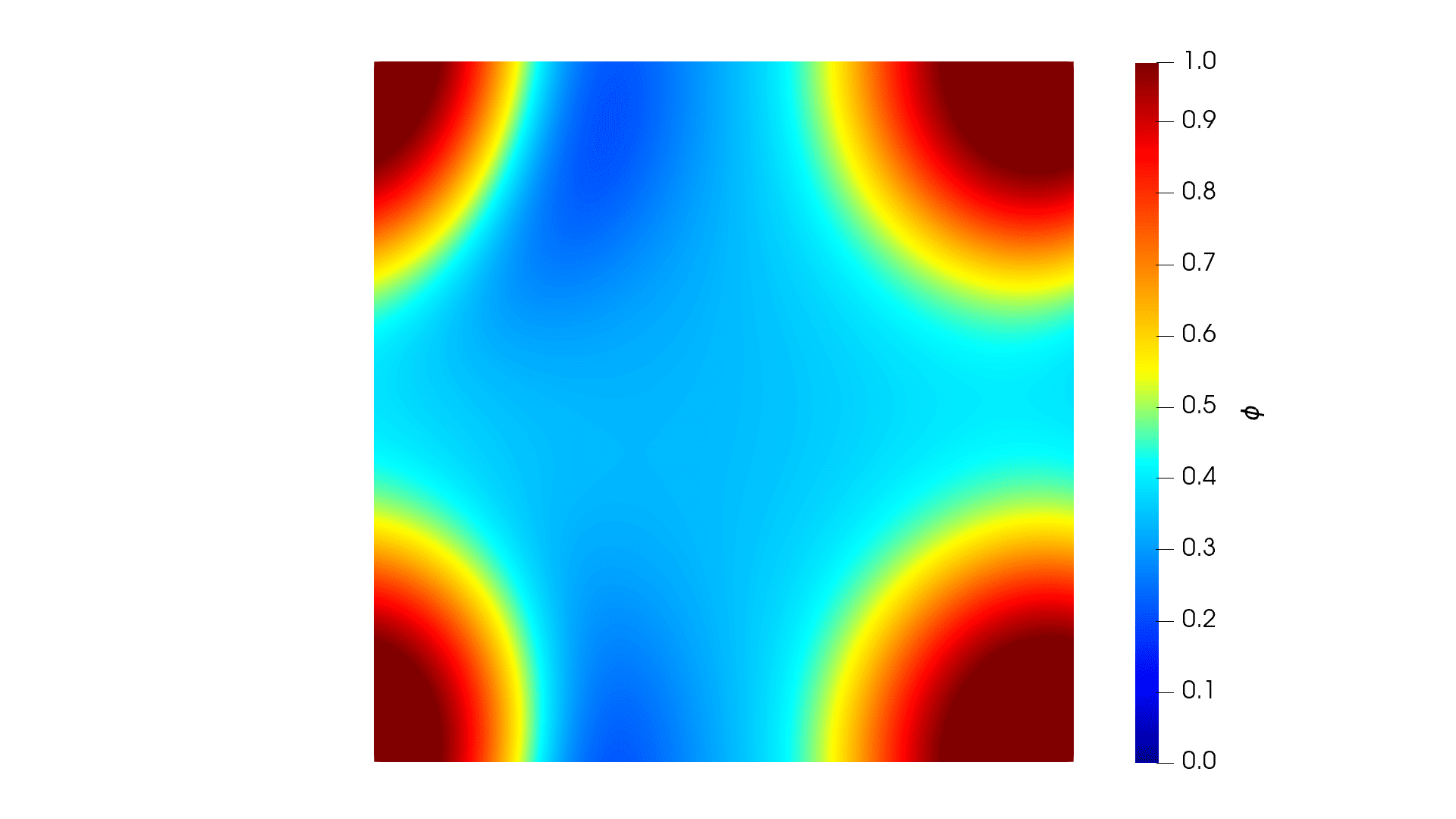} \\[-0.5em]
        \multicolumn{4}{c}{\includegraphics[trim={8.5cm 4.5cm 4.5cm 30cm},clip,scale=0.22]{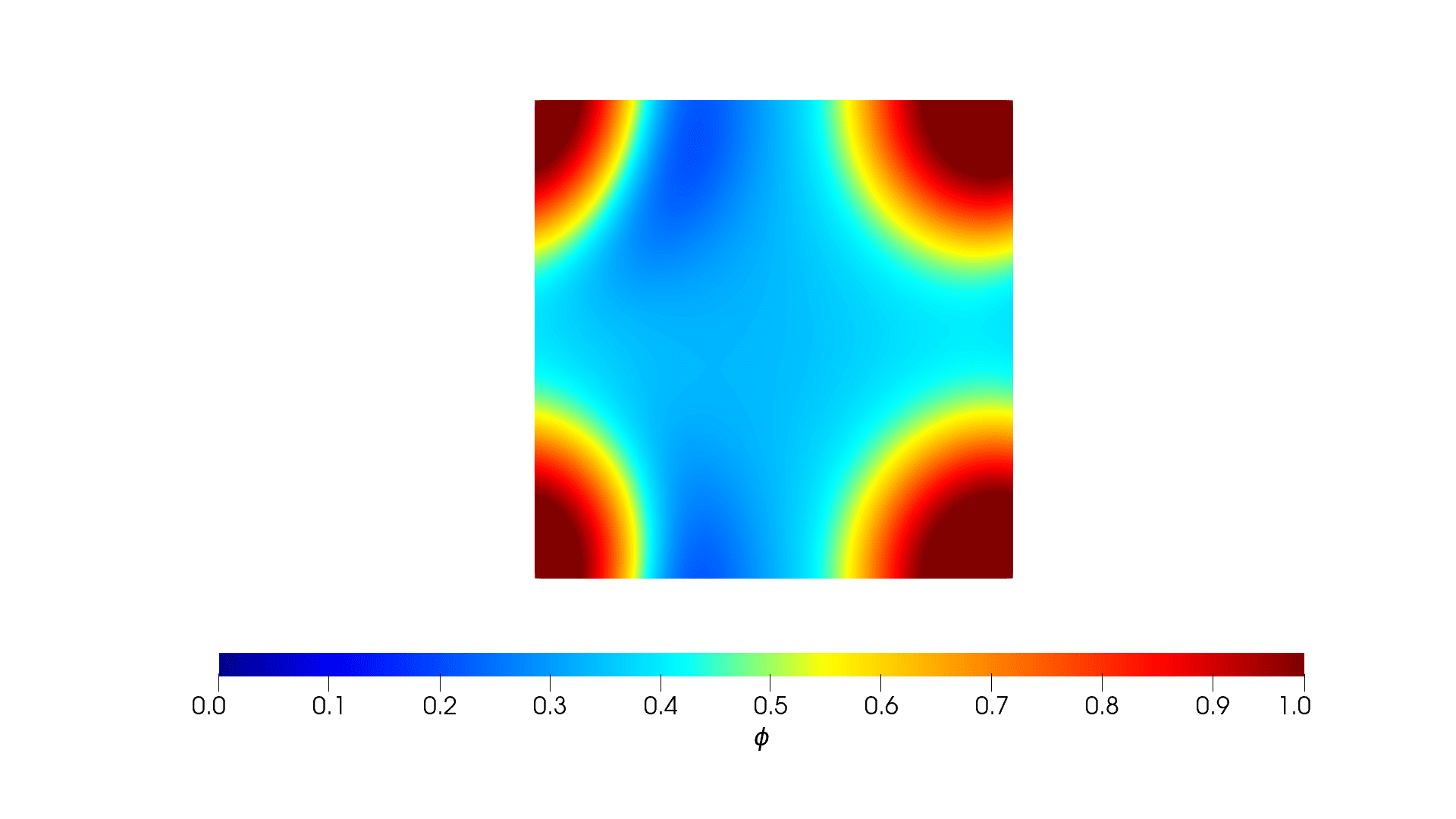}}
    \end{tabular}
    \caption{Snapshots of volume fraction $\phi$ for different cross-coupling matrices $\C$: (Top) $\C=0\cdot\I$; (Bottom) $\C=10^{-4}\cdot\I$.}
    \label{fig:phi}
\end{figure}
\begin{figure}
    \centering
    \begin{tabular}{c@{}c@{}c@{}c@{}}
        $t=0.1$ & $t=0.5$ & $t=2$ & $t=10$ \\[-0.5em]
        \includegraphics[trim={17cm 0cm 17cm 0cm},clip,scale=0.092]{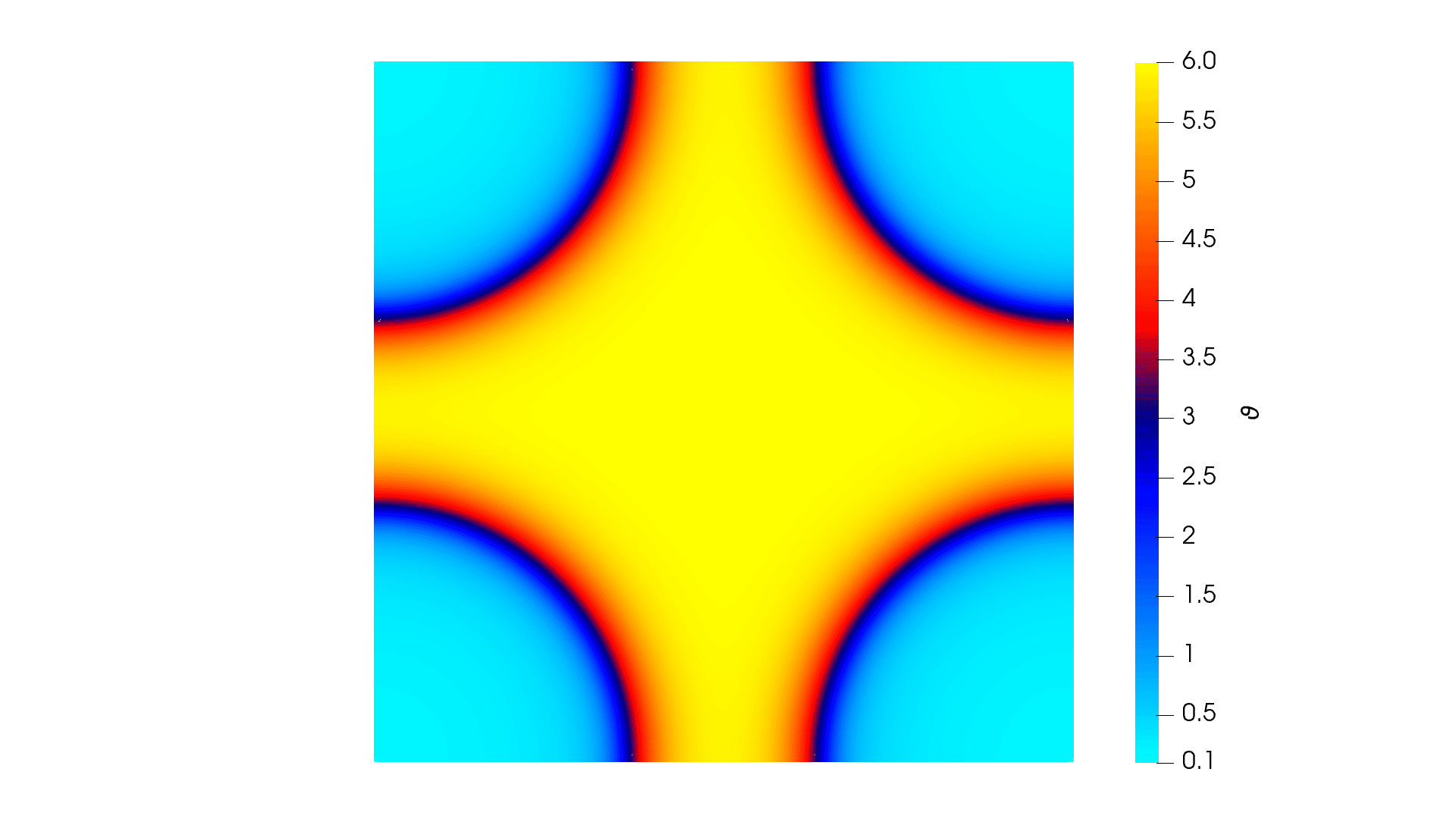} 
        &
        \includegraphics[trim={17cm 0cm 17cm 0cm},clip,scale=0.092]{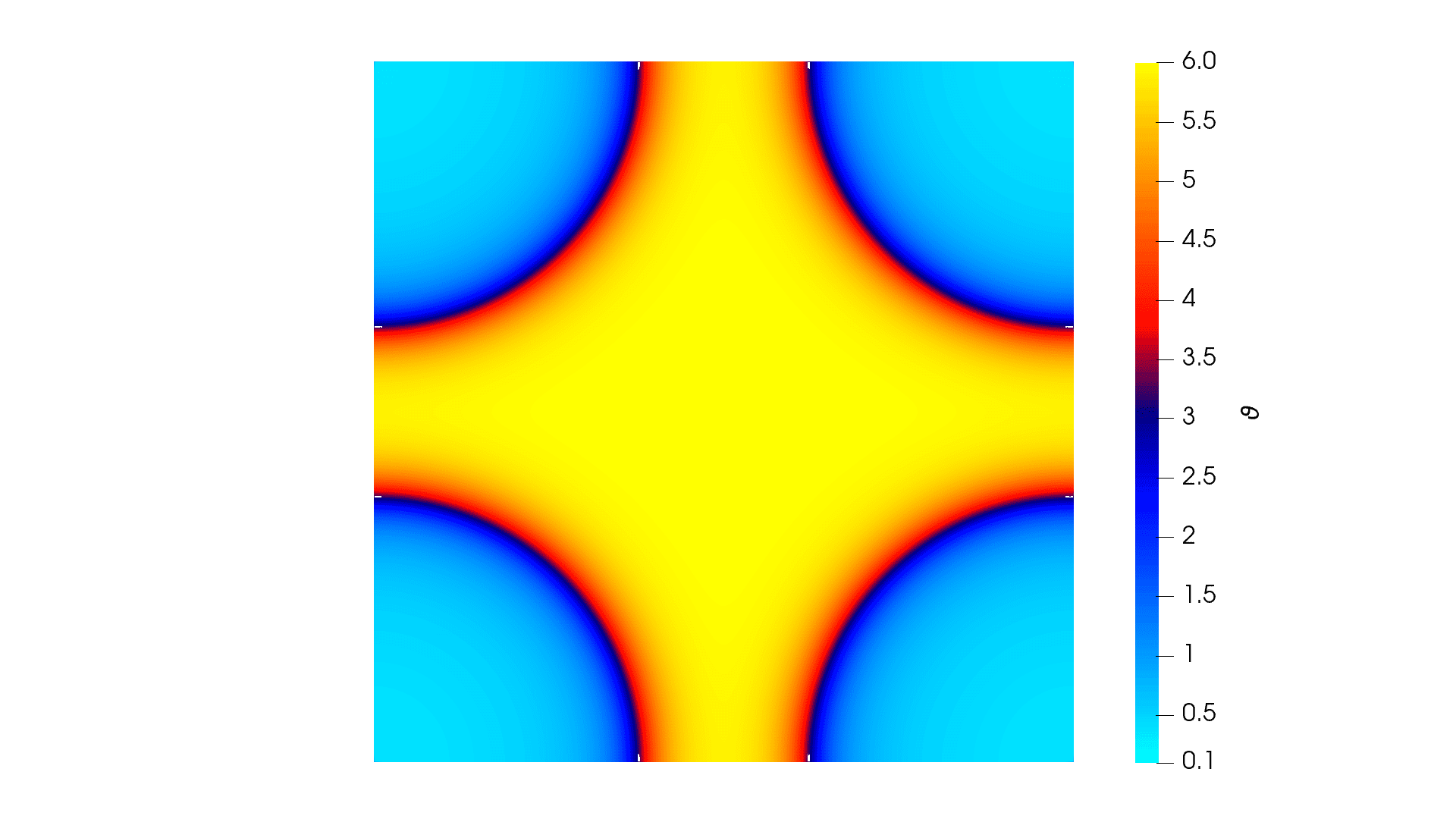}
        &
        \includegraphics[trim={17cm 0cm 17cm 0cm},clip,scale=0.092]{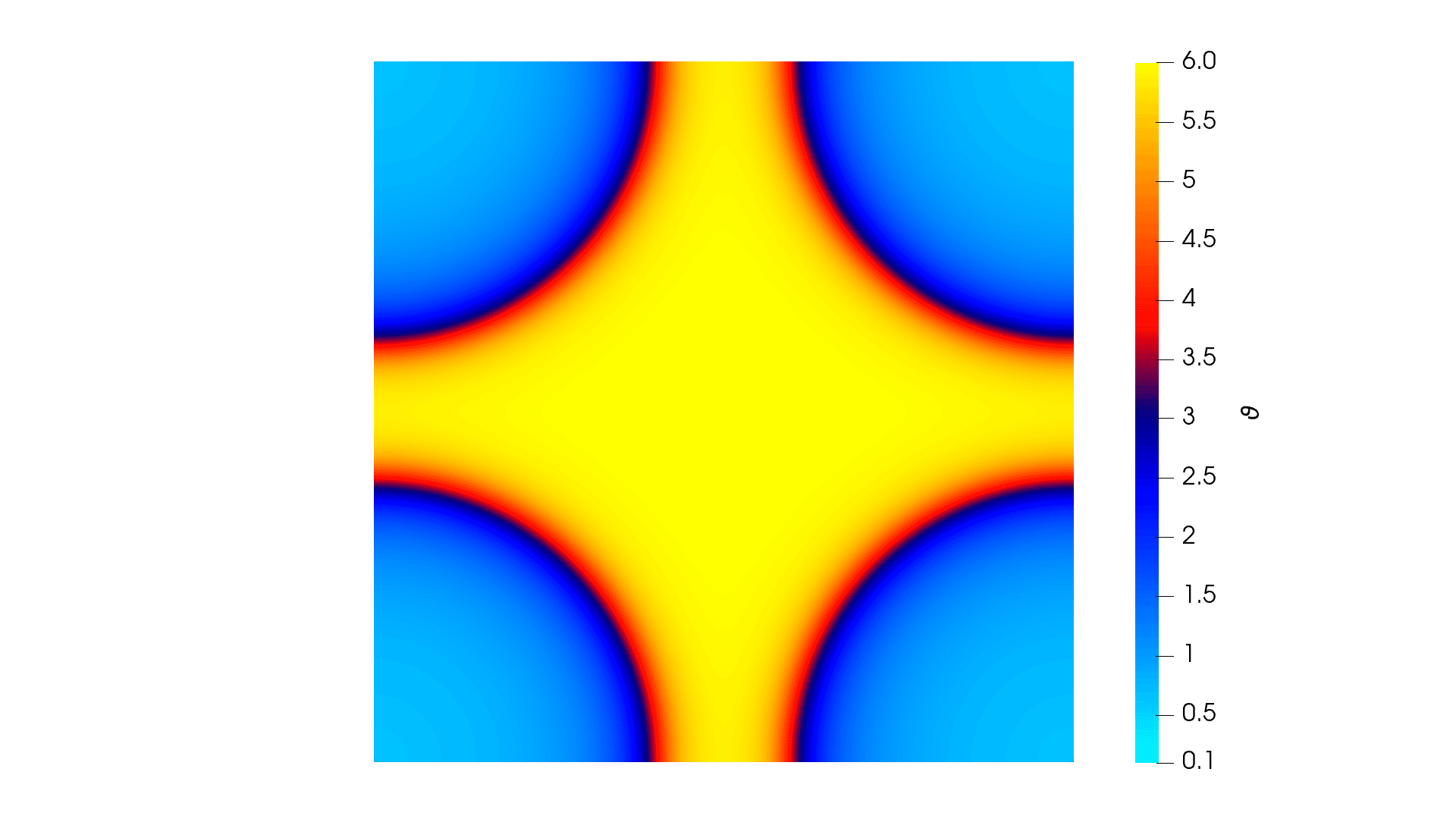}
        &
        \includegraphics[trim={17cm 0cm 17cm 0cm},clip,scale=0.092]{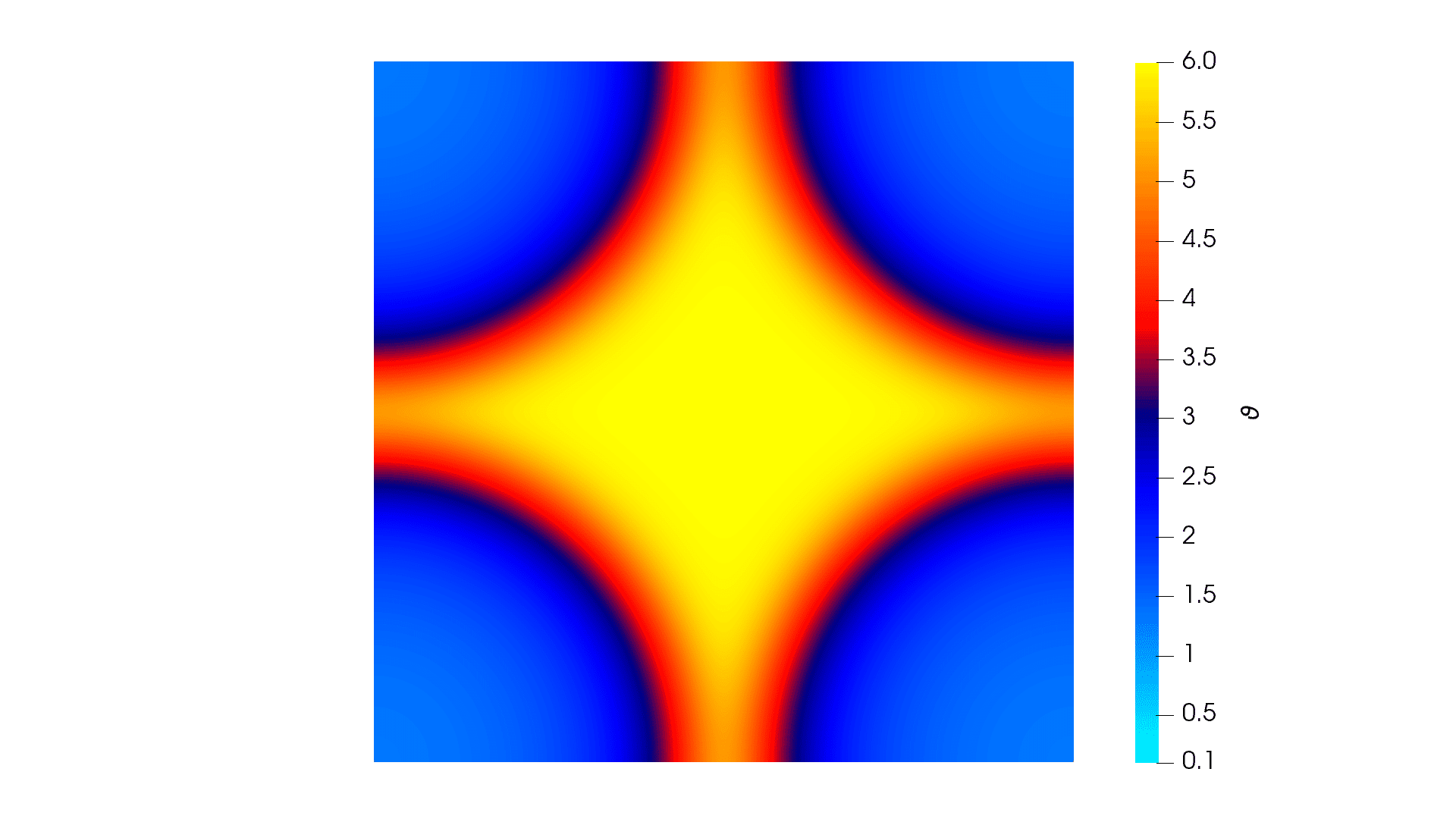} \\[-1em]
         \includegraphics[trim={17cm 0cm 17cm 0cm},clip,scale=0.092]{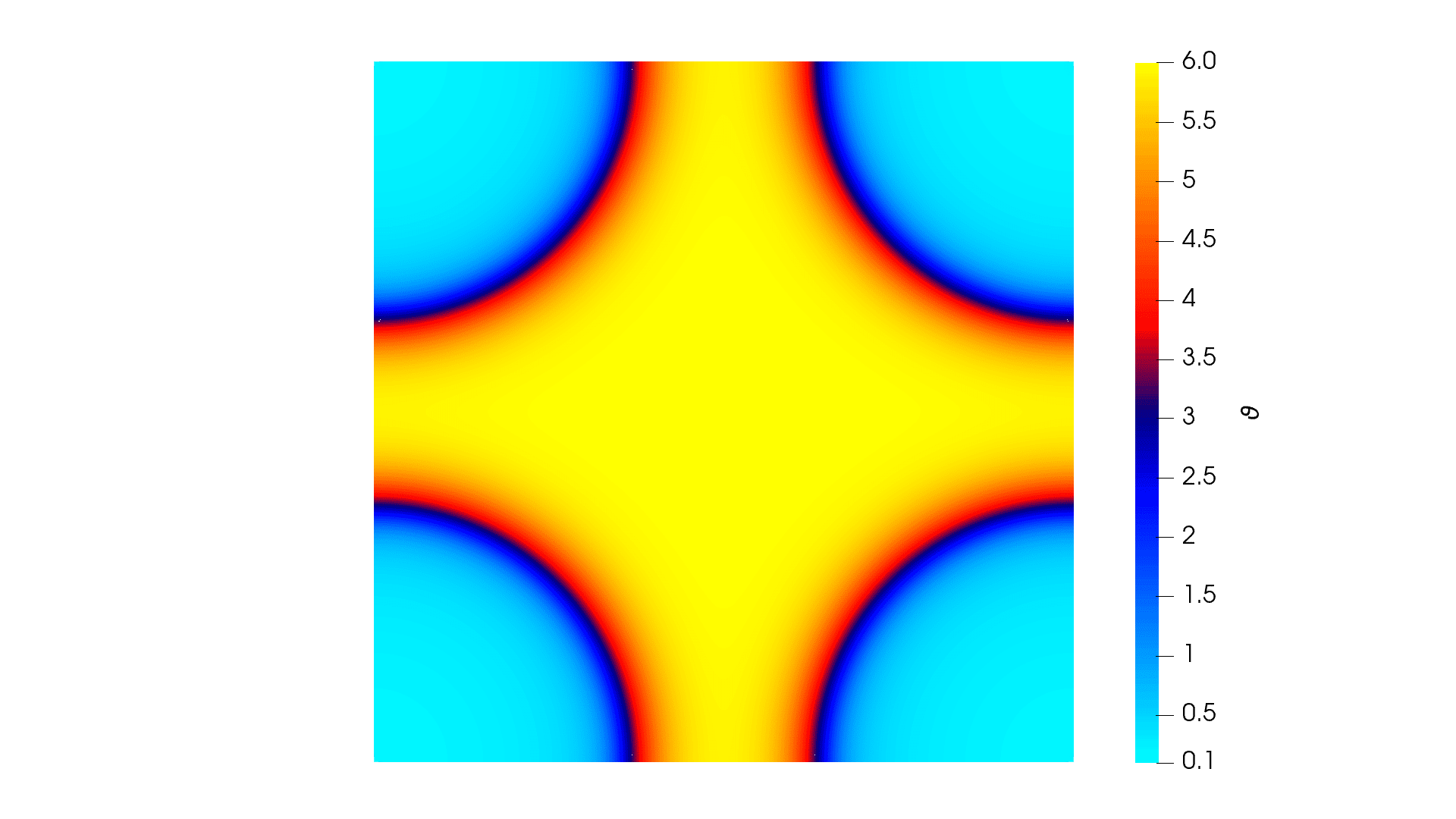}
        &
        \includegraphics[trim={17cm 0cm 17cm 0cm},clip,scale=0.092]{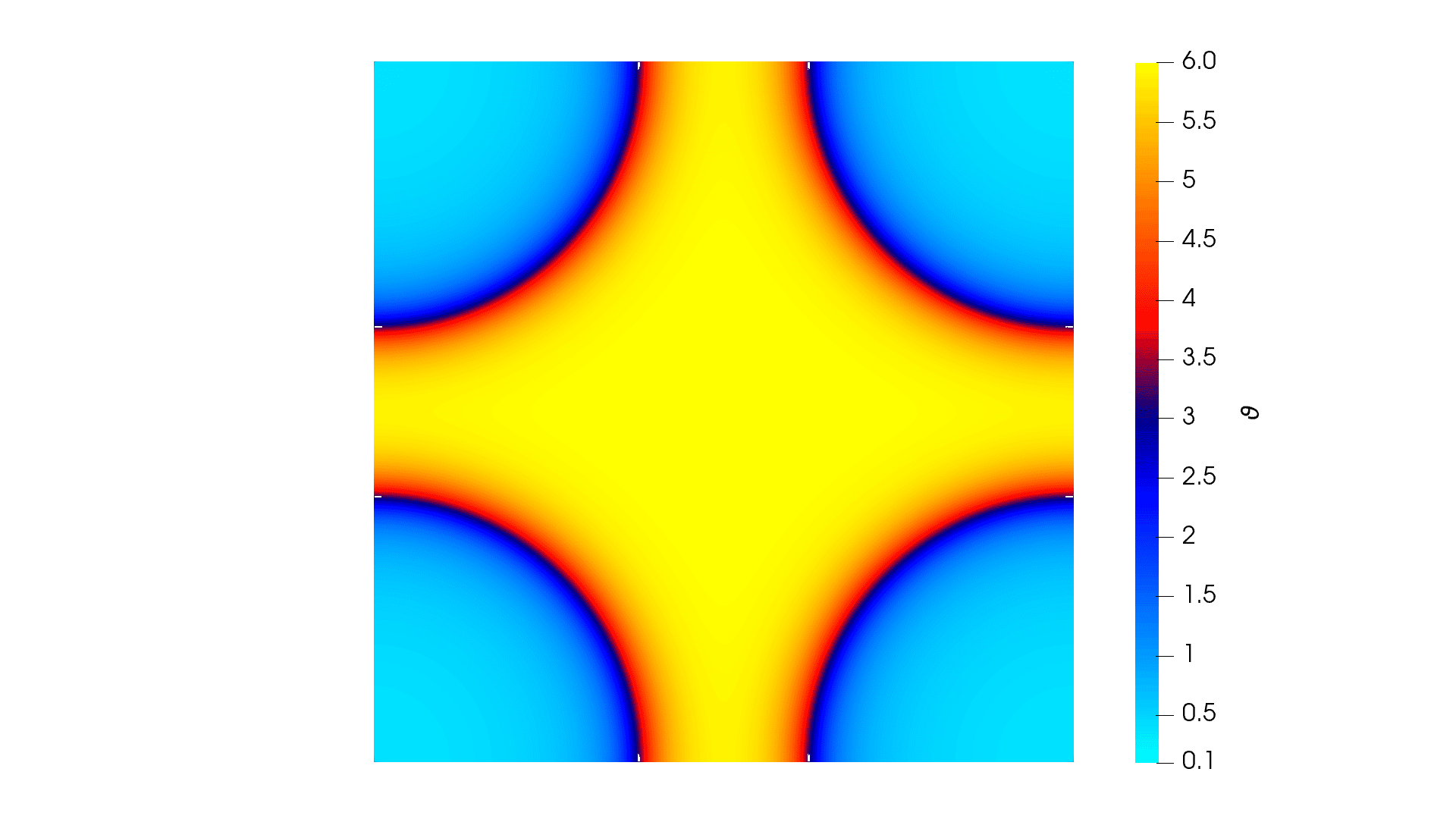}
        &
        \includegraphics[trim={17cm 0cm 17cm 0cm},clip,scale=0.092]{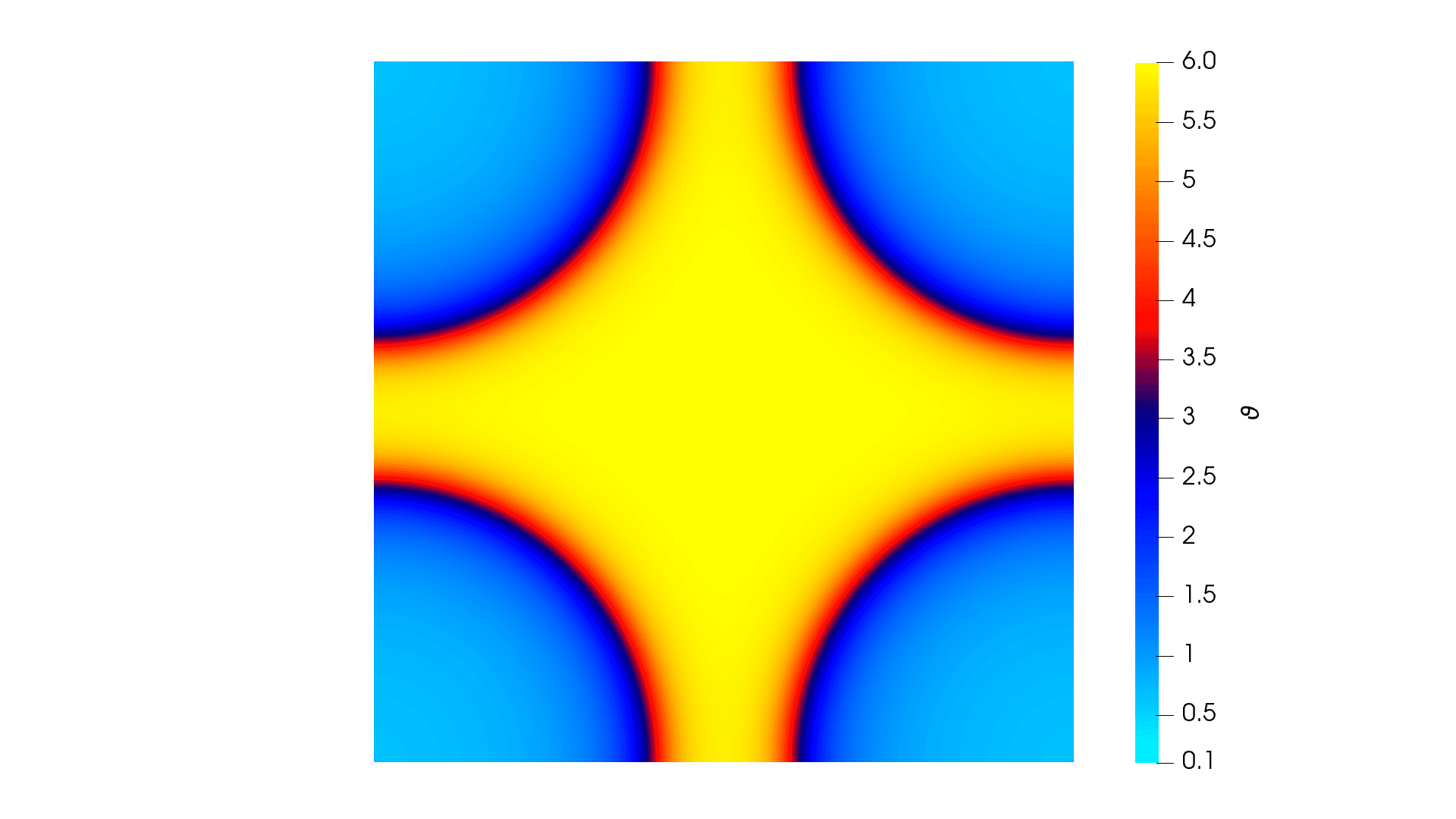}
        &
        \includegraphics[trim={17cm 0cm 17cm 0cm},clip,scale=0.092]{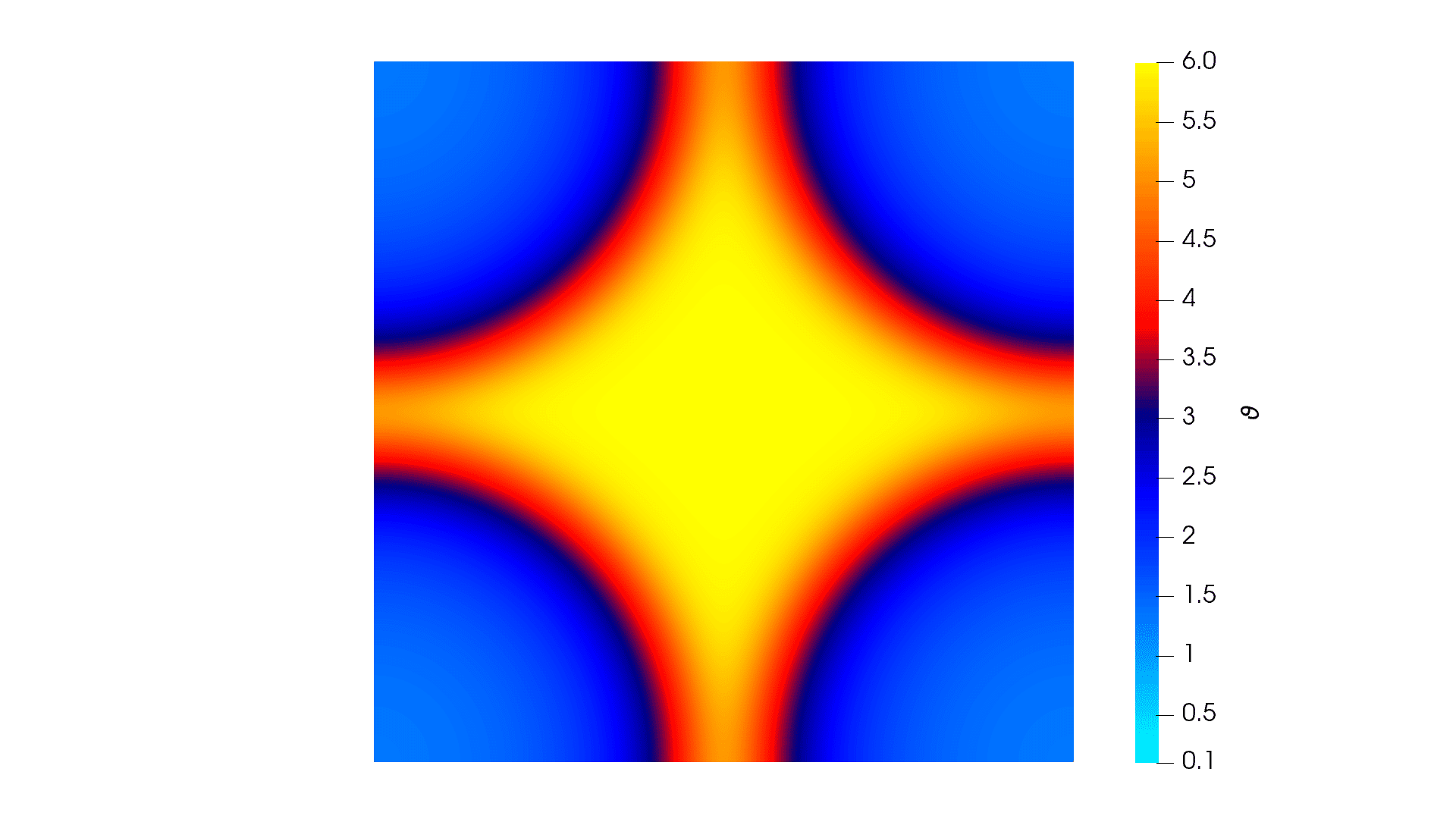} \\[-0.5em]
        \multicolumn{4}{c}{\includegraphics[trim={9cm 4.5cm 4.5cm 30cm},clip,scale=0.22]{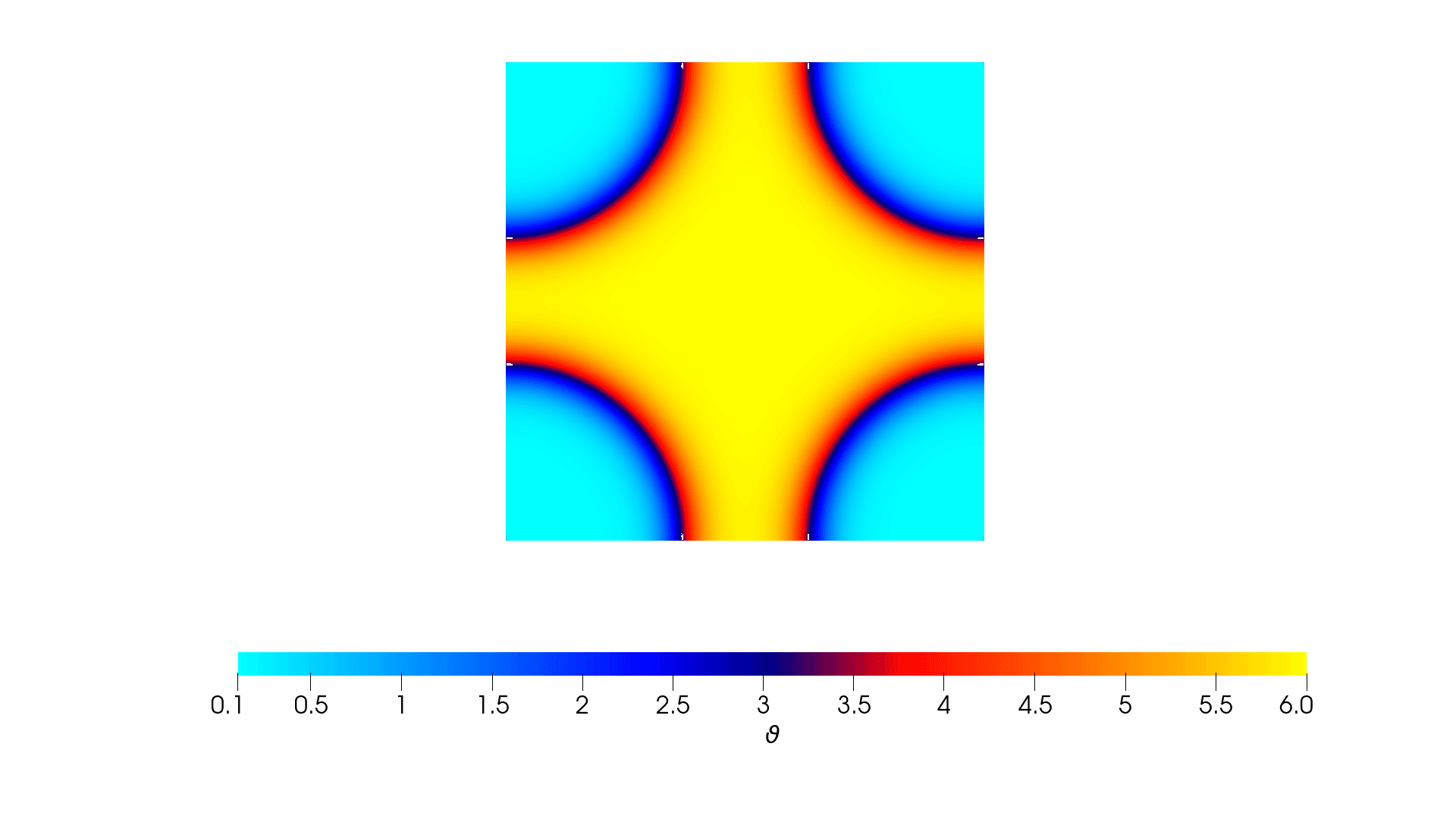}} \\[-0.5em]
    \end{tabular}
    \caption{Snapshots of temperature $\vtheta$ for different cross-coupling matrices $\C$: (Top) $\C=0\cdot\I$; (Bottom) $\C=10^{-4}\cdot\I$.}
    \label{fig:theta}
\end{figure}
We observe that in both cases the temperature profile is essentially the same. Due to the heat conduction, the sharp temperature gradients of the initial condition are smeared and heat dissipates into the colder regions. The temperature remains above the critical temperature $\vtheta_c$ in the middle of the domain and nearly zero at the corners, i.e. below the critical temperature. For the phase variable $\phi_h$, we see that for $\C=0\cdot\I$ a local phase separation occurs. This means that in a certain area around the four corners the mixture starts to separate into distinct phases. In the case of $\C=10^{-4}\cdot\I$, where we allow cross-coupling, the initial evolution is comparable, however we can observe more separation at the interface of two phases and also in the middle of the domain. This is done due to the additional driving force in the Cahn-Hilliard equation, leading to a final state with a fully connected one phase.
\begin{figure}[h]
     \centering
    \begin{tabular}{c@{}c@{}}
        \includegraphics[width=0.49\linewidth]{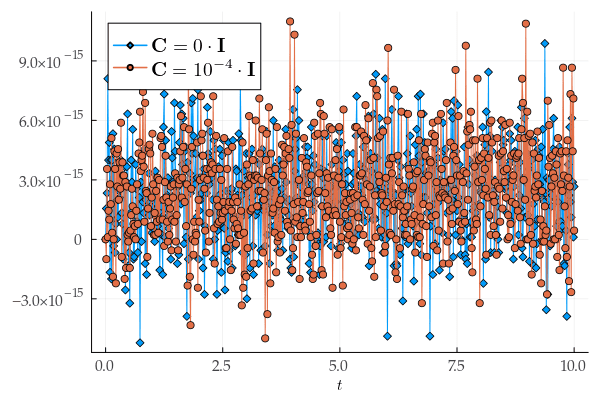} 
        &
        \includegraphics[width=0.49\linewidth]{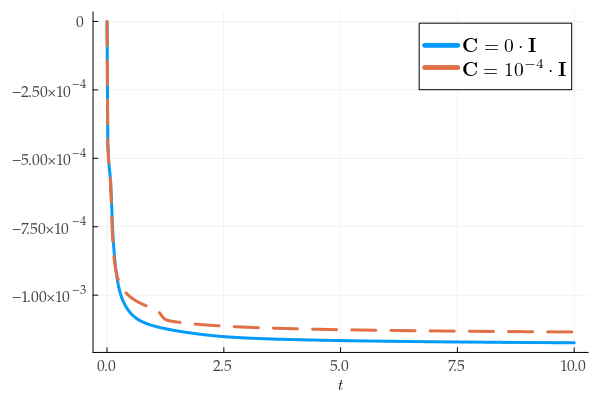}
        \\
        \multicolumn{2}{c}{\includegraphics[width=0.49\linewidth]{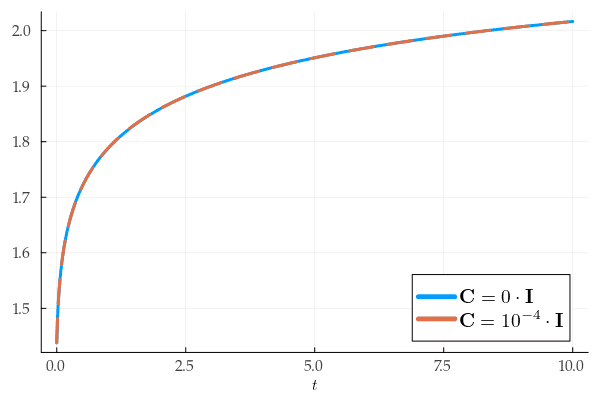}}   
    \end{tabular}
    \caption{Structure preserving properties over time (Top left): Mass conservation error; (Top right): Energy dissipation; (Bottom) Entropy production }
    \label{fig:struc}
\end{figure}

Structure-preserving properties, i.e. the conservation of mass, and energy dissipation as well as the entropy production, are shown in \cref{fig:struc} for both cases.
Numerical results confirm theoretical results from \cref{sec:structurepresurving}. Specifically, mass conservation up to a numerical error of $10^{-14}$, energy dissipation of order $10^{-3}$ as well as the entropy production. We note that the entropy production in both cases only differ by a factor of $10^{-4}$.


\section{Conclusion \& Outlook}

In this work, we proposed a fully discrete finite element method for the non-isothermal Cahn–Hilliard equation, based on a formulation using the entropy equation in place of the internal energy equation. The resulting scheme ensures mass conservation and entropy production, while the total energy—coinciding with the internal energy in this setting—is dissipated over time due to numerical diffusion. Our numerical experiments demonstrate optimal convergence rates: first-order in time and either first- or second-order in space, depending on the norm used. We also presented numerical results that highlight the influence of cross-coupling terms between the Cahn–Hilliard and entropy equations. Future work will aim at removing the artificial numerical dissipation to achieve discrete energy conservation. Additionally, we plan to extend the scheme to incorporate incompressible flow dynamics.

\section*{Acknowledgment}

The present research has been supported by the Deutsche Forschungsgemeinschaft (DFG, German Research Foundation) in the framework to the collaborative research center "Multiscale Simulation Methods for Soft-Matter Systems" (TRR 146) under Project No. 233630050 and by the SPP 2256 "Variational Methods for Predicting Complex Phenomena in Engineering Structures and Materials" under Project No. 441153493. M.L.-M. gratefully acknowledges the support of the Gutenberg Research Fellowship and the Mainz Institute
for Multiscale Modelling (M3odel).
\bibliographystyle{abbrv}
\bibliography{lit}
\end{document}